\DeclareMathOperator*{\esssup}{ess\,sup}
\DeclareMathOperator*{\essinf}{ess\,inf}
\newtheorem{theorem}{Theorem}[section]
\newtheorem{proposition}[theorem]{Proposition}
\newtheorem{example}[theorem]{Example}
\newtheorem{lemma}[theorem]{Lemma}
\newtheorem{definition}[theorem]{Definition}
\newtheorem{remark}[theorem]{Remark}
\numberwithin{equation}{section}
\pgfplotsset{compat=1.18}
\definecolor{mygreen1}{RGB}{20, 180, 120}
\title[Symmetrization and the Poisson equation]{Symmetrization of measures and the one-dimensional Poisson equation with Dirichlet boundary conditions}
\author{Christos Papadimitriou}
\address{Aristotle University of Thessaloniki\\
         Department of Mathematics\\
         Thessaloniki, Greece, 546 35}
\email{papadimitc@math.auth.gr}
\date{June 2025}
\subjclass[2020]{34B05, 34C10}
\keywords{Poisson's equation, Dirichlet boundary conditions, polarization, symmetrization, star function, comparison theorems}
\begin{document}
\begin{abstract}
Let \(\mu\) be a finite Borel measure on \((-\pi,\pi)\). Consider the one-dimensional Poisson equation \(-u''=\mu\), where equality holds in the sense of distributions, with Dirichlet boundary conditions \(u(\pm\pi)=0\). In this paper, we define measures that are transformations of \(\mu\), we compare the convex integral means of the original solutions \(u_\mu\) and the transformed ones, and we prove the uniqueness of a solution that maximizes the convex integral means. 
\end{abstract}
\maketitle
\section{Introduction}
Imagine a metal rod of length \(2\pi\) that is heated through by an external heat source with distribution \(f\in L^1[-\pi,\pi]\). We are interested in the temperature \(u_f\) of the steady-state of the rod, meaning that the temperature of the rod does not change over time, if we suppose that the end-points of the rod are kept frozen.
To put this into equations, we have that \(u_f\) has to satisfy the Poisson equation
\begin{equation}\label{poissoneq}
    -u''=f\text{ on \([-\pi,\pi]\)}
\end{equation}
with Dirichlet boundary conditions
\begin{equation}\label{dirichletcond}
    u(-\pi)=u(\pi)=0.
\end{equation}
Another way to view this thought experiment is to let the rod interact with its environment considering the end-points of the rod to be submerged in a cold bath of temperature zero and allow heat be dissipated via Newton's law of cooling, i.e. the heat flux at the end-points is proportional to the temperature. In this case, the steady-state of the rod has to satisfy Robin boundary conditions, i.e. for some constant \(\alpha>0\) (which depends on the material of the rod or the type of liquid) we have
\begin{equation}\label{Robin}
-u'(-\pi)+\alpha u(-\pi)=u'(\pi)+\alpha u(\pi)=0.
\end{equation}
With this setup, we can approximate Dirichlet boundary conditions by letting \(\alpha\) tend to \(+\infty\). The physical interpretation of this is that the liquid that surrounds the rod, not only starts with temperature zero, but also remains frozen and keeps the end-points of the rod frozen as well, thus it has infinite thermal capacity.
\par Recently, J. J. Langford and P. McDonald \cite{Langford}, studied the problems (\ref{poissoneq}), (\ref{Robin}) and (\ref{poissoneq}), (\ref{dirichletcond}) and were interested in comparing different solutions and the changes of specific properties of the solutions \(u_f\), after performing rearrangement transformations to the heat source \(f\). Namely, they were interested in the convex integral means of the solutions and, as a consequence, their \(L^p\)-norms.
\par Approaching the Dirichlet problem through the Robin problem has the benefit that enables us to apply known results about the Robin problem (for example, theorems from \cite{Langford} or \cite{egw}) in our context, by simply using the Theorem of Dominated Convergence. A more interesting question would be to investigate what happens if the heat source is not a Lebesgue integrable function but rather a more general object, a Borel measure.
\par Throughout this paper, we will use the term \textit{Borel measure on \((-\pi,\pi)\)} when referring to a positive measure on the Borel \(\sigma\)-algebra of \((-\pi,\pi)\) that is not the zero measure. Let \(\mu\) be a finite Borel measure on \((-\pi,\pi)\). We will show (see Proposition \ref{Representation}) that there exists a unique absolutely continuous function \(u_\mu\) on \([-\pi,\pi]\) that solves the Poisson equation
\begin{equation}\label{Poisson}
    -u''=\mu \text{ on \((-\pi,\pi)\) in the sense of distributions}
\end{equation}
and satisfies Dirichlet boundary conditions
\begin{equation}\label{Dirichlet}
    u(-\pi)=u(\pi)=0.
\end{equation}
We will talk more about distributions later and give equivalent expressions of \((\ref{Poisson})\).\\
\noindent
\textbf{Notation.}
\begin{itemize}
    \item Let \(\lambda\) denote the one-dimensional Lebesgue measure. As usual, if a measure \(\mu\) is absolutely continuous with respect to the Lebesgue measure, we write \(\mu\ll\lambda\), and if a measure \(\sigma\) is singular to the Lebesgue measure, we write \(\sigma\perp\lambda\).
    \item In the special case that \(d\mu=fd\lambda\) for some \(f\in L^1[-\pi,\pi]\), we will denote the solution \(u_\mu\) to the problem (\ref{Poisson}), (\ref{Dirichlet}) by \(u_f\).
\end{itemize}
\vspace{2mm}
For the general definition of {\it symmetric decreasing rearrangement} (s.d.r. for short) of functions, we refer to \cite{Baernstein}. In Section 3 we present a special case for the s.d.r. of functions that better suits our context. Moreover, we will introduce the symmetrization of finite Borel measures. More will be said about this in Section 3, but, let us give a brief insight here.
\par Let \(\mu\) be a finite Borel measure on \((-\pi,\pi)\), let \(\sigma\) be the singular part of \(\mu\) and \(f\in L^1[-\pi,\pi]\) the Radon--Nikodym derivative of \(\mu\). We define the finite Borel measure \(\mu^\#\), and call it the symmetrization of \(\mu\), to be the measure such that the Radon--Nikodym derivative of \(\mu^\#\) is the s.d.r. \(f^\#\) of \(f\) and the singular part of \(\mu^\#\) is \(\sigma((-\pi,\pi))\delta_0\), which is a constant multiple of the Dirac measure \(\delta_0\) with a unit point mass at 0. To condense all this, if
\begin{equation}
d\mu=fd\lambda+d\sigma,
\end{equation}
then we define \(\mu^\#\) such that
\begin{equation}
d\mu^\#=f^\#d\lambda+\sigma((-\pi,\pi))d\delta_0.
\end{equation}
For more details, see Definitions in Subsection 3.2.
\par Our main result is the following.
\begin{theorem}[S.d.r.\ and convex integral means]\label{sdrconvex}
Let \(\mu\) be a finite Borel measure on \((-\pi,\pi)\) and let \(\mu^\#\) be its symmetrization. Let \(u_\mu\) and \(u_{\mu^\#}\) be the corresponding solutions to the Dirichlet problem \((\ref{Poisson})\), \((\ref{Dirichlet})\). Then
\begin{equation}\label{ineq sdr}
\int_{-\pi}^\pi \phi\bigl(u_\mu(x)\bigr)dx\leq\int_{-\pi}^\pi \phi\bigl(u_{\mu^\#}(x)\bigr)dx
\end{equation}
for any convex and increasing function \(\phi:\mathbb{R}\to\mathbb{R}.\)
\par Moreover, if \(\phi\) is strictly convex, then \((\ref{ineq sdr})\) holds as an equality if and only if the Radon--Nikodym derivative \(f\) of \(\mu\) is equal to its s.d.r. \(f^\#\) a.e. on \((-\pi,\pi)\) and the singular part of \(\mu\) is a Dirac mass at the origin.
\end{theorem}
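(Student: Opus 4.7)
The plan is to apply the Baernstein star-function technique. For $u \in L^1[-\pi,\pi]$, define
\[
u^*(r) = \sup\Bigl\{\int_E u\,dx : E \subset [-\pi,\pi],\ |E|=2r\Bigr\},\quad r \in [0,\pi].
\]
Standard properties of $u^*$, to be used freely (see Section 3 or \cite{Baernstein}), are: $u^*$ is concave on $[0,\pi]$ with $u^*(0)=0$ and $u^*(\pi) = \int u\,dx$; if $u$ is even and decreasing on $[0,\pi]$ then $u^*(r) = \int_{-r}^{r} u\,dx$; and the Hardy--Littlewood--P\'olya principle says $u^* \leq v^*$ on $[0,\pi]$ implies $\int \phi(u)\,dx \leq \int \phi(v)\,dx$ for every increasing convex $\phi$, with strict inequality when $\phi$ is strictly convex unless $u$ and $v$ are equimeasurable. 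The theorem reduces to proving $u^*_\mu \leq u^*_{\mu^\#}$ on $[0,\pi]$.

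The central step is to establish the distributional comparison
\[
-(u^*_\mu)''(r) \;\leq\; \mu^\#\bigl([-r,r]\bigr) \;=\; -(u^*_{\mu^\#})''(r)\qquad\text{on }(0,\pi).
\]
The right-hand equality holds because $u_{\mu^\#}$ is even and decreasing on $[0,\pi]$, so $u^*_{\mu^\#}(r) = \int_{-r}^{r} u_{\mu^\#}\,dx$, and differentiating twice (using $-u_{\mu^\#}''=\mu^\#$ in the distributional sense) gives the claim. The left-hand inequality is driven by the elementary mass estimate $\mu(E) \leq \mu^\#([-r,r])$ for every measurable $E$ with $|E|=2r$, which holds for the absolutely continuous part by the definition of $f^\#$ and for the singular part because all of $\sigma$ is concentrated at $0$ in $\mu^\#$. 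Transferring this into a second-derivative inequality for the supremum $u^*_\mu$ uses the Green's-function representation of $u_\mu$ from Proposition \ref{Representation} and an analysis of the sets extremizing the star function. Setting $w = u^*_{\mu^\#} - u^*_\mu$, we obtain $-w'' \geq 0$ on $(0,\pi)$, $w(0)=0$, and $w(\pi)\geq 0$, the last from the identity $\int_{-\pi}^{\pi} u_\mu\,dx = \int \frac{\pi^2-y^2}{2}\,d\mu(y)$ (a direct Green's-function computation) together with the fact that $y \mapsto (\pi^2-y^2)/2$ is symmetric decreasing on $[-\pi,\pi]$, so integration against $\mu^\#$ yields the larger value. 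Concavity of $w$ with these boundary data forces $w \geq 0$, and (\ref{ineq sdr}) follows.

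For the equality case with strictly convex $\phi$, equality in (\ref{ineq sdr}) forces $u_\mu$ and $u_{\mu^\#}$ to be equimeasurable; combined with $u_{\mu^\#}$ already being symmetric decreasing and $u^*_\mu \leq u^*_{\mu^\#}$, this yields $u^*_\mu \equiv u^*_{\mu^\#}$ on $[0,\pi]$. Consequently the mass estimate $\mu(E) \leq \mu^\#([-r,r])$ must be saturated for sets extremizing the star function. Unpacking saturation identifies $f = f^\#$ a.e., and forces the singular part of $\mu$ to live at $\{0\}$: any singular mass placed at some $x_0 \neq 0$ would fail to contribute to $\mu([-r,r])$ for $r < |x_0|$, whereas the corresponding mass in $\mu^\#$ (located at $0$) contributes to $\mu^\#([-r,r])$ for every $r>0$, producing a strict gap.

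The main obstacle is the left-hand distributional inequality for $-(u^*_\mu)''$ when $\mu$ has a nontrivial singular part, since the classical star-function calculus is written for integrable densities; extending it to measures requires leaning on the representation formula of Proposition \ref{Representation} and an extremal-set analysis that cleanly separates the absolutely continuous and singular contributions. The equality analysis is the second delicate point, since identifying the singular part as a Dirac mass at the origin requires tracking where equality in $\mu([-r,r]) \leq \mu^\#([-r,r])$ can and cannot hold over a dense set of radii.
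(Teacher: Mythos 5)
Your overall strategy -- reduce to the star-function comparison \(u_\mu^\star\leq u_{\mu^\#}^\star\) and then invoke Proposition \ref{prop star means} -- matches the paper's skeleton, but the way you propose to prove the comparison is genuinely different, and it is precisely there that your argument has a gap. The entire technical content of the theorem is your asserted distributional inequality \(-(u_\mu^\star)''(r)\leq\mu^\#([-r,r])\). You do not prove it; you say it follows from ``the Green's-function representation \ldots and an analysis of the sets extremizing the star function,'' and you then name this very step as ``the main obstacle.'' The heuristic computation behind it (extremal sets are level sets of the concave function \(u_\mu\), the jump of \(u_\mu'\) across a level set equals the \(\mu\)-mass of that set, and a harmonic-mean versus sum inequality for the boundary slopes) can be made to work in one dimension, but making it rigorous for a measure with atoms and a singular continuous part -- where \(u_\mu'\) has jumps, level sets can be flat at the top, and \((u_\mu^\star)''\) is itself only a measure -- is real work that is entirely absent. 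The paper avoids this by a different route: for the absolutely continuous part it uses iterated polarizations and Brock's approximation theorem (Lemma \ref{seq of polar to sdr}, Theorem \ref{means sdr f}), for the singular part a direct pointwise maximum estimate against \(G(0,\cdot)\) (Lemma \ref{lemma star}, resting on Theorem \ref{theorem max}), and then assembles \(u_\mu^\star\leq u_{\mu^\#}^\star\) by subadditivity of the supremum in Lemma \ref{lemma star mu}.

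The equality case has a second gap. You lean on a strict form of the Hardy--Littlewood--P\'olya principle -- ``strict inequality when \(\phi\) is strictly convex unless \(u\) and \(v\) are equimeasurable'' -- as a property ``to be used freely.'' Proposition \ref{prop star means} gives only the non-strict equivalence; the strict version in the weak-submajorization setting (where \(\int u_\mu\,dx<\int u_{\mu^\#}\,dx\) is possible) is exactly what the paper spends a page establishing, via the decomposition \(\phi=\phi_1+\phi_2\), the representation \(\phi_2(s)=\phi(0)+\int_{\mathbb{R}}(s-t)^+\,d\nu(t)\), and the gap interval \(J=(\max u_\mu,\,u_{\mu^\#}(0))\) with \(\nu(J)>0\), which is nonempty precisely because of the strict inequality in Theorem \ref{theorem max} when \(\mu\neq\mu^\#\). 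If you do obtain equimeasurability of \(u_\mu\) and \(u_{\mu^\#}\), the clean way to finish is not to ``unpack saturation'' of the mass estimate (which requires extracting pointwise information from an a.e.\ equality of distributional second derivatives) but simply to note that equimeasurable continuous functions have the same maximum and then apply the equality case of Theorem \ref{theorem max}, which already identifies \(f=f^\#\) a.e.\ and the singular part as a Dirac mass at the origin.
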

In particular, for \(p\geq1\), consider the convex and increasing function
\[
\phi(s)=
\begin{cases}
0, & s\leq0,\\
s^p, &s\geq0.
\end{cases}
\]
Then, \((\ref{ineq sdr})\) gives inequalities for the \(L^p\)-norms of \(u_\mu\) and \(u_{\mu^\#}\). Letting \(p\to+\infty\) we get the inequality for their \(L^\infty\)-norms, i.e.
\begin{equation}\label{inequality of Lp norms eisawgh}
||u_\mu||_{L^p}\leq||u_{\mu^\#}||_{L^p} \text{, for }1\leq p\leq+\infty.
\end{equation}
In Section 6, we deal with the case of equality in \((\ref{inequality of Lp norms eisawgh})\).
\begin{remark}
\textnormal{ In the case when \(\mu\) is an absolutely continuous measure, inequality \((\ref{ineq sdr})\) was proved in \cite{Langford}. In that paper, the case of equality was not investigated.}
\end{remark}
The paper is structured as follows. In Section 2, we present some fundamental facts about distributions as well as the general properties of the solutions \(u_\mu.\) Section 3 is dedicated to finite Borel measures. In that section, we talk about the Lebesgue--Radon--Nikodym decomposition of a measure, we introduce the notion of polarizations and symmetrizations for measures, and finally we give our second main theorem (Theorem \ref{polarconvex}), which is a comparison theorem for polarization. Section 4 contains preliminary results that will be used to prove the main theorems; these proofs appear in Section 5. Finally, Section 6 connects the symmetrization of measures and the \(L^p-\)norms of solutions.
\section{Preliminaries}
\subsection{Distributions on \texorpdfstring{\((-\pi,\pi)\)}{TEXT}} In this subsection we present the fundamentals of distributions. The general cases for everything we present here without a proof can be found in \cite[Chapter 9]{Folland}. We shall start off with some notation.
\par Let \(C_c^\infty(-\pi,\pi)\) denote the set of infinitely differentiable real functions with compact support in \((-\pi,\pi)\). Now, consider the following notion of convergence in \(C_c^\infty(-\pi,\pi)\): Let \(\{\psi_n\}_{n\in\mathbb{N}}\subset C_c^\infty(-\pi,\pi)\). We say that the sequence \(\{\psi_n\}\) converges to zero in \(C_c^\infty(-\pi,\pi)\) if and only if:\\
1) there exists a compact set \(K\subset(-\pi,\pi)\) such that \(\operatorname{supp}\psi_n\subseteq K\) 
for all \(n\in\mathbb{N}\),\\
2) for each nonnegative integer \(a\), we have
\[\max_{x\in K}\left|\frac{\partial^{a}}{\partial x^{a}}\psi_n(x)\right|\xrightarrow{n\to+\infty}0.\]
We say that \(\{\psi_n\}\subset C_c^\infty(-\pi,\pi)\) converges to \(\psi\) in \(C_c^\infty(-\pi,\pi)\) if and only if \(\{\psi_n-\psi\}\) converges to zero in \(C_c^\infty(-\pi,\pi)\). The set of functions \(C_c^\infty(-\pi,\pi)\) equipped with this notion of convergence will be symbolized with \(\mathcal{D}(-\pi,\pi)\) and we call it the space of test functions in \((-\pi,\pi)\).
\par A continuous linear functional \(T\) on the space of test functions \(\mathcal{D}(-\pi,\pi)\) is called a distribution on \((-\pi,\pi).\) We shall use the notation \(<T,\psi>\) to denote the value of \(T\) acting on the test function \(\psi\). The set of all distributions on \((-\pi,\pi)\) is symbolized with \(\mathcal{D}'(-\pi,\pi)\). We call two distributions \(T_1\), \(T_2\) on \(\mathcal{D}'(-\pi,\pi)\) equal if and only if \(<T_1,\psi>=<T_2,\psi>\) for all \(\psi\in\mathcal{D}(-\pi,\pi).\)
\begin{example} \textnormal{Let \(f\in L^1[-\pi,\pi]\). Then \[<T_f,\psi>\coloneqq\int_{-\pi}^\pi f(x)\psi(x)dx,\text{ for }\psi\in\mathcal{D}(-\pi,\pi),\]
defines a distribution on \((-\pi,\pi)\). Analogously, if \(\mu\) is a Borel measure on \((-\pi,\pi)\), then
\[<T_\mu,\psi>=\int_{-\pi}^\pi\psi(x) d\mu(x)\text{ for }\psi\in\mathcal{D}(-\pi,\pi),\]
also defines a distribution on \((-\pi,\pi)\). It is very common to identify \(T_f\) with \(f\) and \(T_\mu\) with \(\mu\).}
\end{example}
\begin{definition}[Distributional derivatives]
\textnormal{Let \(T\in\mathcal{D}'(-\pi,\pi)\). The distribution \(T'\) such that \(<T',\psi>=-<T,\psi'>\) for all \(\psi\in\mathcal{D}(-\pi,\pi)\) is called the distributional derivative of \(T\).}
\end{definition}
This process can be repeated ad infinitum. In this paper, we will use only the first and second distributional derivatives. The second distributional derivative \(T''\) of \(T\in\mathcal{D}'(-\pi,\pi)\) is given by
\[<T'',\psi>=-<T',\psi'>=-(-<T,\psi''>)=<T,\psi''>.\]
\begin{example}\textnormal{Let \(x_0\in (-\pi,\pi)\) be a fixed point. The Dirac measure \(\delta_{x_0}\) with point mass at \(x_0\) defines a distributions in the following way:
\[<\delta_{x_0},\psi>=\psi(x_0),\text{ for }\psi\in\mathcal{D}(-\pi,\pi).\]}
\end{example}
\subsection{The Dirichlet problem for the distributional Poisson equation}
\begin{definition}\textnormal{
The Green's function for the Dirichlet boundary problem on the interval \([-\pi,\pi]\)
is 
\begin{equation}\label{Green}
G(x,y)=-\frac{1}{2\pi}xy-\frac{1}{2}|x-y|+\frac{\pi}{2},\quad x,y\in[-\pi,\pi].
\end{equation}}
\end{definition}
It is trivial to see that \(G\) vanishes on the boundary of the square \([-\pi,\pi]\times[-\pi,\pi]\), is strictly positive in the interior of the square, and attains global maximum \(G(0,0)=\pi/2.\) Also, \(G(x,y)=G(y,x)\) for all \(x,y\in[-\pi,\pi].\)
\begin{proposition}\label{G distonos}
For \(x,y\in(-\pi,\pi)\), the second distributional derivative with respect to \(x\) of \(G(x,y)\) is \(-\delta_y(\{x\})\).
\begin{proof}
Fix \(y_0\in(-\pi,\pi).\) Then
\[G_x(x,y_0)=\frac{\partial G}{\partial x}(x,y_0)=
\begin{dcases}
-\frac{1}{2\pi}y_0+\frac{1}{2}, & x\in(-\pi,y_0),\\
-\frac{1}{2\pi}y_0-\frac{1}{2}, & x\in(y_0,\pi).
\end{dcases}\]
So, for the second distributional derivative with respect to \(x\) of \(G(x,y_0)\), we have
\begin{align*}
<G_{xx},\psi>&=-<G_x,\psi'>=-\int_{-\pi}^\pi G_x(x,y_0)\psi'(x)dx\\
&=-\int_{-\pi}^{y_0}\bigl(-\frac{1}{2\pi}y_0+\frac{1}{2}\bigr)\psi'(x)dx-\int^{\pi}_{y_0}\bigl(-\frac{1}{2\pi}y_0-\frac{1}{2}\bigr)\psi'(x)dx\\
&=\bigl(\frac{1}{2\pi}y_0-\frac{1}{2}\bigr)\int_{-\pi}^{y_0} \psi'(x)dx+\bigl(\frac{1}{2\pi}y_0+\frac{1}{2}\bigr)\int_{y_0}^\pi\psi'(x)dx\\
&=\bigl(\frac{1}{2\pi}y_0-\frac{1}{2}\bigr)\bigl(\psi(y_0)-\psi(-\pi)\bigr)+\bigl(\frac{1}{2\pi}y_0+\frac{1}{2}\bigr)\bigl(\psi(\pi)-\psi(y_0)\bigr)\\
&=-\psi(y_0).
\end{align*}
But \(\displaystyle<\delta_{y_0},\psi>=\int_{-\pi}^\pi \psi(x)d\delta_{y_0}(x)=\psi(y_0).\)
\end{proof}
\end{proposition}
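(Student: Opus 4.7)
The plan is to prove the statement by directly applying the definition of the second distributional derivative. Fix $y_0\in(-\pi,\pi)$ and view $G(\,\cdot\,, y_0)$ as a function of $x$. The key observation is that $G(\,\cdot\,, y_0)$ is continuous and piecewise affine on $[-\pi,\pi]$, with a single corner at $x=y_0$ caused by the $|x-y_0|$ term; everywhere else it is smooth in $x$. Hence, although $\partial_{xx}G(\,\cdot\,,y_0)$ vanishes in the classical sense away from $x=y_0$, there is a jump in $\partial_x G(\,\cdot\,,y_0)$ at $x=y_0$ that will produce a Dirac mass at the second-derivative level.

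First I would differentiate $G(x,y_0)$ classically in $x$ on the two intervals $(-\pi,y_0)$ and $(y_0,\pi)$. The linear term $-\tfrac{1}{2\pi}xy_0$ contributes $-\tfrac{y_0}{2\pi}$ on both pieces, while the absolute-value term $-\tfrac{1}{2}|x-y_0|$ contributes $+\tfrac{1}{2}$ for $x<y_0$ and $-\tfrac{1}{2}$ for $x>y_0$. This gives a piecewise-constant function whose jump at $x=y_0$ equals $-1$. Since $G$ is absolutely continuous in $x$, this classical derivative also represents the first distributional derivative $G_x$.

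Next I would compute $\langle G_{xx},\psi\rangle = -\langle G_x,\psi'\rangle$ for an arbitrary test function $\psi\in\mathcal{D}(-\pi,\pi)$. Splitting the integral at $y_0$ and using that $G_x$ is constant on each piece, the integral of $\psi'$ can be evaluated by the fundamental theorem of calculus, producing boundary terms at $-\pi$, $y_0$, and $\pi$. The terms at $\pm\pi$ vanish because $\psi$ has compact support in $(-\pi,\pi)$. What remains is the jump $\bigl((G_x)(y_0^-)-(G_x)(y_0^+)\bigr)\psi(y_0)=-\psi(y_0)$, matching $-\langle\delta_{y_0},\psi\rangle$.

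I do not expect genuine obstacles here: the computation is a routine jump-in-the-derivative calculation and is essentially a one-dimensional version of the standard verification that the Newtonian kernel is a fundamental solution of the Laplacian. The only minor care needed is to treat the two cases $x<y_0$ and $x>y_0$ separately when differentiating $|x-y_0|$, and to invoke $\psi(\pm\pi)=0$ at the right moment to discard the boundary contributions.
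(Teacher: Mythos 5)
Your proof is correct and follows essentially the same route as the paper's: compute the piecewise-constant classical derivative \(G_x(\cdot,y_0)\) on \((-\pi,y_0)\) and \((y_0,\pi)\), integrate against \(\psi'\), and use \(\psi(\pm\pi)=0\) to isolate the jump contribution at \(y_0\). The only blemish is a sign slip in your last display: the coefficient of \(\psi(y_0)\) that survives is \(G_x(y_0^+)-G_x(y_0^-)=-1\), not \(G_x(y_0^-)-G_x(y_0^+)\), consistent with the jump of \(-1\) you correctly identified earlier.
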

\begin{figure}[htp]\centering
\begin{tabular}{@{}cc@{}}
\begin{tikzpicture}[define rgb/.code={\definecolor{mycolor}{RGB}{#1}},
                    rgb color/.style={define rgb={#1},mycolor}]
\begin{axis}
[  
    x=8mm,
    y=8mm,
    xtick={ 0 },
    xmin=-3.5,
    xmax=3.5,
    xlabel={\tiny $y$},
    extra x ticks = { -pi , -3 , -2 , -pi/2 , -1 , 0 , pi/4 , 1 , 2 , 3 , pi },
    extra x tick labels = { $-\pi$ , $\phantom{.}$,  $\phantom{.}$ , $-\frac{\pi}{2}$ , $\phantom{.}$, $0$ , $\frac{\pi}{4}$ , $\phantom{.}$ , $\phantom{.}$ , $\phantom{.}$, $\pi$ },
    axis x line=middle,
    ytick={2},
    tick label style={font=\tiny},
    extra y ticks = {1},
    extra y tick labels = {$\phantom{.}$},
    ymin=0,
    ymax=2.5,
    axis y line=middle,
    no markers,
    samples=100,
    domain=-pi:pi,
    restrict y to domain=-pi:pi,
]
\addplot[blue] coordinates { (-pi,0) (-pi/2,3*pi/8) (pi,0) }; 
\addplot[black] coordinates { (-pi,0) (0,pi/2) (pi,0) };
\addplot[red] coordinates { (-pi,0) (pi/4,15*pi/32) (pi,0) };
\end{axis}
\end{tikzpicture}
&\begin{tikzpicture}[define rgb/.code={\definecolor{mycolor}{RGB}{#1}},
                    rgb color/.style={define rgb={#1},mycolor}]
\begin{axis}
[  
    x=8mm,
    y=8mm,
    xtick={0},   
    xmin=-4,
    xmax=4,
    xlabel={\tiny $x$},
    extra x ticks = {-pi , pi/2 , pi},
    extra x tick labels = {$-\pi$,  $y_0$ , $\pi$},
    axis x line=middle,
    ytick={-2,-1,0,1,2},
    tick label style={font=\tiny},
    ymin=-1.5,
    ymax=1.5,
    axis y line=middle,
    no markers,
    samples=100,
    domain=-pi:pi,
    restrict y to domain=-pi:pi
]
\addplot[black] coordinates { (-pi,1/4) (pi/2,1/4) };
\addplot[black] coordinates { (pi/2,-3/4) (pi,-3/4) };
\addplot[densely dotted] coordinates { (-pi,1) (pi,0) };
\addplot[densely dotted] coordinates { (-pi,0) (pi,-1) };
\filldraw[color=black, fill=white] (pi/2,1/4) circle (2pt);
\filldraw[color=black, fill=white] (pi/2,-3/4) circle (2pt);
\filldraw[black] (-pi,1) circle (0.5pt) node[anchor=south]{\tiny{$(-\pi,1)$}};
\filldraw[black] (pi,-1) circle (0.5pt) node[anchor=north]{\tiny{$(\pi,-1)$}};
\filldraw[black] (-pi,1/4) circle (0.5pt);
\filldraw[black] (pi,-3/4) circle (0.5pt);
\end{axis}
\end{tikzpicture}
\\ (a) & (b) \end{tabular}
\caption{(a) Graphs of \(\color{blue}{G(-\pi/2,y)}\), \(\color{black}{G(0,y)}\) and \(\color{red}{G(\pi/4,y)}\); (b) Graph of \(G_x(x,y_0)\).}
\end{figure}
The following lemma is a slight improvement of the classic ``differentiation under the integral sign" theorem. It will be used to prove the absolute continuity of solutions \(u_\mu\) in Proposition \ref{Representation}. We omit the proof of the lemma, since it can be achieved by following the proof of \cite[Theorem 2.27]{Folland} almost to the letter, and then applying \cite[Theorem 3.35]{Folland}.
\begin{lemma}\label{Lemma}
Let \((X,\mathcal{M},\mu)\) be a measure space. Suppose that a function \(f:X\times[a,b]\to\mathbb{C}\), where \(-\infty<a<b<+\infty\), satisfies the following conditions:
\begin{enumerate}
    \item[1.] \(f(x,t)\) is a measurable function of \(x\) and \(t\) jointly, and \(f(\cdot,t)\in L^1(\mu)\) for all \(t\in[a,b]\) held fixed.
    \item[2.] For all \(x\in X\), \(f(x,t)\) is an absolutely continuous function of \(t\).
    \item[3.] \(\partial f/ \partial t\) belongs to \(L^1(a,b)\).
\end{enumerate}
Then \(F(t)=\int_X f(x,t)d\mu(x)\) is an absolutely continuous function of \(t\), and for almost every \(t\in [a,b]\), its derivative exists and is given by
\[F'(t)=\frac{d}{dt}\int_X f(x,t)d\mu(x)=\int_X\frac{\partial}{\partial t}f(x,t)d\mu(x).\]
\end{lemma}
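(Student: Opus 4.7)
The plan is to prove the lemma in two stages. First, I would establish that $F(t) := \int_X f(x,t) \, d\mu(x)$ is absolutely continuous on $[a,b]$. Then, appealing to Folland's Theorem 3.35, which states that every absolutely continuous function on a compact interval is almost everywhere differentiable and equals the integral of its derivative, I would identify $F'$ with the candidate expression $t \mapsto \int_X (\partial f/\partial t)(x,t) \, d\mu(x)$ almost everywhere.

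For the first stage, I would follow the template of the proof of Folland's Theorem 2.27 almost verbatim. Given a finite family $\{(a_i,b_i)\}_{i=1}^N$ of pairwise disjoint open subintervals of $[a,b]$, the absolute continuity of $f(x,\cdot)$ (condition (2)) gives, for every $x \in X$,
\[
f(x,b_i) - f(x,a_i) = \int_{a_i}^{b_i} \frac{\partial f}{\partial t}(x,s) \, ds.
\]
Summing in $i$, integrating against $d\mu(x)$, and exchanging the order of integration by Fubini (legitimized by reading condition (3) as integrability of $\partial f/\partial t$ against the product measure $\mu \otimes \lambda$ on $X \times (a,b)$) produces
\[
\sum_{i=1}^N |F(b_i) - F(a_i)| \leq \int_{\bigcup_i (a_i,b_i)} h(s) \, ds, \qquad h(s) := \int_X \left|\frac{\partial f}{\partial t}(x,s)\right| d\mu(x).
\]
Since $h \in L^1(a,b)$, the absolute continuity of the indefinite Lebesgue integral of $h$ forces the right-hand side to tend to $0$ as $\sum_i (b_i - a_i) \to 0$, which yields absolute continuity of $F$.

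For the second stage, Folland's Theorem 3.35 supplies $F(t) - F(a) = \int_a^t F'(s) \, ds$ for every $t \in [a,b]$. Independently, integrating the pointwise identity $f(x,t) - f(x,a) = \int_a^t (\partial f/\partial t)(x,s) \, ds$ in $d\mu(x)$ and applying Fubini once more gives
\[
F(t) - F(a) = \int_a^t \left( \int_X \frac{\partial f}{\partial t}(x,s) \, d\mu(x) \right) ds.
\]
Equating the two antiderivatives and invoking the Lebesgue differentiation theorem pins down $F'(s) = \int_X (\partial f/\partial t)(x,s) \, d\mu(x)$ for almost every $s \in [a,b]$, as desired.

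The main obstacle I anticipate is the justification of Fubini's theorem at both stages. One must verify that $(x,s) \mapsto (\partial f/\partial t)(x,s)$ is jointly measurable on $X \times [a,b]$, which follows from condition (1) because the partial derivative is recovered, where it exists, as a pointwise limit of difference quotients of a jointly measurable function, and one must confirm its integrability against $\mu \otimes \lambda$, which is exactly how condition (3) should be interpreted for the chain of estimates to close. Once Fubini is in place, every remaining step is essentially formal, and the argument concludes by mirroring the classical proof of differentiation under the integral sign.
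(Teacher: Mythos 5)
Your proposal is correct and follows exactly the route the paper prescribes: the paper omits the proof, stating only that it is obtained by adapting the proof of Folland's Theorem 2.27 and then applying Folland's Theorem 3.35, which is precisely your two-stage argument (absolute continuity of \(F\) via the Fubini estimate, then identification of \(F'\) through the fundamental theorem of calculus for absolutely continuous functions). Your explicit handling of the joint measurability of \(\partial f/\partial t\) and the product-integrability reading of condition 3 is a sensible clarification of a hypothesis the paper leaves vague.
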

\begin{proposition}\label{Representation}
Let \(\mu\) be a finite Borel measure on \((-\pi,\pi)\). There exists a unique absolutely continuous function \(u_\mu\) on \([-\pi,\pi]\) that solves the Poisson equation
\[-u_\mu''=\mu\text{ on } (-\pi,\pi) \text{ in the sense of distributions}\]
and satisfies the Dirichlet boundary conditions \(u_\mu(-\pi)=u_\mu(\pi)=0\). The solution \(u_\mu\) can be represented as
\begin{equation}\label{u_mu}
u_\mu(x)=\int_{-\pi}^\pi G(x,y)d\mu(y).
\end{equation}
\end{proposition}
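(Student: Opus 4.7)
The natural plan is to define $u_\mu(x)\coloneqq\int_{-\pi}^\pi G(x,y)\,d\mu(y)$ outright and then verify, one at a time, that this function is absolutely continuous on $[-\pi,\pi]$, vanishes at the endpoints, solves $-u''=\mu$ in $\mathcal{D}'(-\pi,\pi)$, and is the only such function. Well-definedness is immediate because $G$ is continuous on $[-\pi,\pi]^2$ (hence bounded) and $\mu$ is finite; the boundary conditions follow at once from $G(\pm\pi,y)=0$ for $y\in(-\pi,\pi)$.

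For absolute continuity I would invoke Lemma \ref{Lemma} with the roles $X=(-\pi,\pi)$, measure $\mu$, and $f(y,x)=G(x,y)$ on $X\times[-\pi,\pi]$. Hypothesis (1) is clear since $G$ is continuous (hence jointly Borel measurable) and $G(\cdot,x)\in L^1(\mu)$ because it is bounded and $\mu$ is finite. Hypothesis (2) holds because, for each fixed $y$, $G(\cdot,y)$ is piecewise linear and thus Lipschitz, hence absolutely continuous. Hypothesis (3) follows from the explicit formula for $G_x(x,y)$ computed in the proof of Proposition \ref{G distonos}, which shows $|G_x|\leq\tfrac{1}{2}+\tfrac{1}{2\pi}|y|\leq 1$, so $G_x(\cdot,y)\in L^1(-\pi,\pi)$ uniformly. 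The lemma then yields that $u_\mu$ is absolutely continuous and $u_\mu'(x)=\int_{-\pi}^\pi G_x(x,y)\,d\mu(y)$ for almost every $x$.

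To check the distributional equation, let $\psi\in\mathcal{D}(-\pi,\pi)$. Then
\[
\langle u_\mu'',\psi\rangle=\langle u_\mu,\psi''\rangle=\int_{-\pi}^\pi\!\!\left(\int_{-\pi}^\pi G(x,y)\,d\mu(y)\right)\psi''(x)\,dx.
\]
Fubini--Tonelli applies because $G\cdot\psi''$ is bounded and $\psi''$ has compact support, so we may interchange the integrals to obtain $\int_{-\pi}^\pi d\mu(y)\int_{-\pi}^\pi G(x,y)\psi''(x)\,dx$. By Proposition \ref{G distonos}, the inner integral equals $\langle G_{xx}(\cdot,y),\psi\rangle=-\psi(y)$, so $\langle u_\mu'',\psi\rangle=-\int_{-\pi}^\pi\psi(y)\,d\mu(y)=-\langle\mu,\psi\rangle$, which is precisely $-u_\mu''=\mu$.

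For uniqueness, suppose $v$ is another absolutely continuous solution. Then $w\coloneqq u_\mu-v$ is absolutely continuous, satisfies $w(\pm\pi)=0$, and $w''=0$ in $\mathcal{D}'(-\pi,\pi)$. A standard fact from distribution theory on an interval says that a distribution with vanishing derivative is a constant, so applying this twice gives that $w$ agrees (as a distribution) with an affine function $cx+d$; since $w$ is continuous, $w(x)=cx+d$ pointwise, and the two boundary conditions force $c=d=0$. The main technical step is the careful verification of the hypotheses of Lemma \ref{Lemma} together with the Fubini interchange that reduces the distributional computation to Proposition \ref{G distonos}; everything else is either a direct calculation with the explicit $G$ or an invocation of standard distribution-theoretic facts.
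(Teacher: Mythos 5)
Your proposal is correct and follows essentially the same route as the paper: define $u_\mu$ by the Green's function formula, obtain absolute continuity from Lemma \ref{Lemma}, verify $-u_\mu''=\mu$ by Fubini combined with Proposition \ref{G distonos}, and prove uniqueness by showing the difference of two solutions has vanishing second distributional derivative, hence is affine and killed by the boundary conditions. Your verification of the hypotheses of Lemma \ref{Lemma} is somewhat more explicit than the paper's, but the argument is the same.
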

\begin{proof}
First, we prove the uniqueness. Let \(u\) and \(v\) be absolutely continuous solutions to the problem \((\ref{Poisson}),(\ref{Dirichlet}).\) Consider \(w(x)=u(x)-v(x)\), of which the second distributional derivative is the zero distribution. Subsequently, the first distributional derivative of \(w\) is a constant distribution, which means that there exists \(a\in\mathbb{R}\) such that \(w'(x)=a\) a.e. on \([-\pi,\pi]\) and thus there also exists \(b\in\mathbb{R}\) such that \(w(x)=ax+b\) a.e. on \([-\pi,\pi]\). We assumed that \(u\) and \(v\) are continuous, and so is \(w\), therefore \(w(x)=ax+b\) on \([-\pi,\pi]\). Also, \(w(\pm\pi)=u(\pm\pi)-v(\pm\pi)=0\), so \(a=b=0\) and \(w\equiv0\).
\par For the existence, we shall prove that \(u_\mu\), as given in \((\ref{u_mu})\), solves the problem. Trivially, \(u_\mu\) satisfies the boundary conditions \(u_\mu(\pm\pi)=0\), since \(G(x,\pm\pi)=0.\) Also, it is easy to verify that \(u_\mu\) satisfies the assumptions of Lemma \ref{Lemma} and thus it is absolutely continuous on \([-\pi,\pi]\). To see that \(-u_\mu''=\mu\) in the sense of distributions, we apply Fubini's Theorem and Proposition \ref{G distonos}:
\begin{align*}
-<u_\mu'',\psi>&=-<u_\mu,\psi''>=-\int_{-\pi}^\pi u_\mu(x)\psi''(x)dx\\
&=-\int_{-\pi}^\pi\int_{-\pi}^\pi G(x,y)d\mu(y)\psi''(x)dx\\
&=-\int_{-\pi}^\pi\int_{-\pi}^\pi G(x,y)\psi''(x)dxd\mu(y)\\
&=-\int_{-\pi}^\pi<G,\psi''>d\mu(y)=-\int_{-\pi}^\pi<G_{xx},\psi>d\mu(y)\\
&=-\int_{-\pi}^\pi(-\psi(y))d\mu(y)=\int_{-\pi}^\pi\psi(y)d\mu(y)=<\mu,\psi>.
\end{align*}
\end{proof}
\begin{remark}\label{remark thetikes luseis}
\textnormal{By Proposition \(\ref{Representation}\), for any finite Borel measure \(\mu\) on \((-\pi,\pi)\), the solution \(u_\mu\) to the Dirichlet problem is strictly positive on \((-\pi,\pi)\).}
\end{remark}
\subsection{Examples} We give some examples of solutions to the Dirichlet problem \((\ref{Poisson}),\) \((\ref{Dirichlet})\) for various measures \(\mu\). Note that these measures will have the same total variation. This occurs because the total variation is invariant under the measure transformations that we introduce in the next section.
\begin{example}[Lebesgue measure]
\[u_\lambda(x)=\pi^2-\frac{1}{4}\bigl((x+\pi)|x+\pi|+(\pi-x)|\pi-x|\bigr).\]
\end{example}
\begin{example}[Dirac measure with mass at zero]\label{example dirac}
\[u_{2\pi\delta_0}(x)=\pi^2-\pi|x|=2\pi G(x,0).\]
\end{example}
More generally, \(u_{a\delta_{x_0}}(x)=aG(x,x_0),\) where \(a>0\) and \(x_0\in(-\pi,\pi).\) 
\begin{example}[Sequence of absolutely continuous measures that tents to Dirac]\textnormal{
Consider the measures \(\mu_n\) such that \(d\mu_n=2\pi\dfrac{n}{2}\mathcal{\chi}_{[-\frac{1}{n},\frac{1}{n}]}d\lambda\). Then
\[u_{\mu_n}(x)=
\begin{dcases}
\pi^2+\pi x, & x\in\bigl[-\pi,-\frac{1}{n}\bigr],\\
-\frac{\pi n x^2}{2}+\pi^2-\frac{\pi}{2n}, & x\in\bigl[-\frac{1}{n},\frac{1}{n}\bigr],\\
\pi^2-\pi x, & x\in\bigl[\frac{1}{n},\pi\bigr].
\end{dcases}\]
Note that \(u_{\mu_n}\equiv u_{2\pi\delta_0}\) on \(\displaystyle[-\pi,\pi]\setminus\bigl(-\frac{1}{n},\frac{1}{n}\bigr)\).}
\end{example}
\begin{figure}[htp]\centering
\begin{tabular}{@{}cc@{}}
\begin{tikzpicture}[define rgb/.code={\definecolor{mycolor}{RGB}{#1}},
                    rgb color/.style={define rgb={#1},mycolor}]
\begin{axis}
[  
    x=8mm,
    y=8mm,
    xtick={-pi,-1,-0.5,0.5,1,pi},
    xmin=-3.5,
    xmax=3.5,
    xlabel={\tiny $x$},
    axis x line=middle,
    ytick={pi^2/2,pi^2-pi/2,pi^2-pi/4,pi^2},
    tick label style={font=\tiny},
    ymin=0,
    ymax=10,
    axis y line=middle,
    no markers,
    samples=200,
    domain=-pi:pi,
    restrict y to domain=-pi:pi,
]
\addplot[mygreen1] {pi*pi-((x+pi)*abs(x+pi)+(pi-x)*abs(pi-x))/4}; 
\addplot[black] {pi^2-pi*abs(x)};
\addplot[domain=-1:1][blue] {-pi*x^2/2+pi^2-pi/2};
\addplot[domain=-1/2:1/2][red] {-pi*x^2+pi^2-pi/4};
\draw[dotted] (-1,0) -- (-1,pi^2-pi);
\draw[dotted] (1,0) -- (1,pi^2-pi);
\draw[dotted] (-0.5,0) -- (-0.5,pi^2-pi/2);
\draw[dotted] (0.5,0) -- (0.5,pi^2-pi/2);
\end{axis}
\end{tikzpicture}
&\begin{tikzpicture}[define rgb/.code={\definecolor{mycolor}{RGB}{#1}},
                    rgb color/.style={define rgb={#1},mycolor}]

\end{tikzpicture}
\end{tabular}
\caption{Graphs of \(\color{mygreen1}{u_\lambda(x)}\), \(\color{black}{u_{2\pi\delta_0}(x)}\), \(\color{blue}{u_{\mu_1}}(x)\) and \(\color{red}{u_{\mu_2}}(x)\).}
\end{figure}
\section{Finite Borel measures}
\subsection{Lebesgue--Radon--Nikodym decomposition}
\begin{definition}\label{decompdef}\textnormal{ Let \(\mu\) be a finite Borel measure on \((-\pi,\pi).\)
\begin{itemize}
    \item If \(\mu(\{x\})=0\) for all \(x\in(-\pi,\pi)\), then \(\mu\) is called continuous.
    \item If there exists a (possibly finite) sequence \(\{x_i\}\) of distinct points in \((-\pi,\pi)\) and a sequence of numbers \(\{a_i\}\in \ell^1\) such that \(\mu=\sum_i a_i \delta_{x_i},\) where \(\delta_{x_i}\) is the Dirac measure with unit mass at the point \(x_i\), then \(\mu\) is called discrete or purely discontinuous.
\end{itemize}}
\end{definition}
Note that all discrete measures are singular with respect to the Lebesgue measure and all absolutely continuous measures with respect to the Lebesgue measure are continuous. The converse of neither of the above statements is true.\\
\vspace{0mm}
\par The following Theorem can be found in the literature for measures that are regular Borel (\cite[Theorem 19.61]{Hewitt}).
\begin{theorem}\label{decomp}
Let \(\mu\) be a (positive) regular Borel measure on \(\mathbb{R}\). Then \(\mu\) can be expressed in a unique way as
\[\mu=\nu+\sigma+\delta,\]
where \(\nu,\sigma\) and \(\delta\) are (positive) regular Borel measures on \(\mathbb{R}\), \(\nu\ll\lambda\), \(\sigma\perp\lambda\), \(\delta\) is purely discontinuous and \(\sigma\) is continuous.
\end{theorem}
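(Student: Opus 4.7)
The plan is to reduce the statement to the classical Lebesgue--Radon--Nikodym theorem and then separate the atomic part of the singular component. A positive regular Borel measure on $\mathbb{R}$ is automatically $\sigma$-finite, since $\mathbb{R}$ is a countable union of compact intervals and each such interval has finite $\mu$-mass by regularity. The classical theorem therefore yields a unique decomposition $\mu=\nu+\rho$ with $\nu\ll\lambda$ and $\rho\perp\lambda$, so the whole problem reduces to splitting $\rho$ uniquely as $\rho = \sigma + \delta$ with $\sigma$ continuous and $\delta$ purely discontinuous.

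For the existence of this splitting, let $A = \{x\in\mathbb{R}:\rho(\{x\})>0\}$ denote the atoms of $\rho$. I would first verify that $A$ is at most countable: on each compact interval $I$, the subset $\{x\in I:\rho(\{x\})>1/n\}$ is finite because $\rho(I)<\infty$, so $A\cap I$ is countable, and summing over a countable exhaustion of $\mathbb{R}$ by compact intervals gives countability of $A$. Then set
\[
\delta = \sum_{x\in A}\rho(\{x\})\,\delta_x,\qquad \sigma = \rho - \delta.
\]
By construction $\delta$ is purely discontinuous. The measure $\sigma$ is nonnegative since $\delta(E)\leq \rho(E)$ for every Borel set $E$ (apply monotone convergence to an enumeration of the atoms of $\rho$ contained in $E$); it is continuous because $\sigma(\{x\}) = 0$ for every $x$; and it is singular to $\lambda$ because $0\leq\sigma\leq\rho$ and $\rho\perp\lambda$.

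For uniqueness, suppose $\nu_1+\sigma_1+\delta_1 = \nu_2+\sigma_2+\delta_2$ are two valid decompositions. Each $\sigma_i+\delta_i$ is singular to $\lambda$, so the uniqueness part of Lebesgue--Radon--Nikodym forces $\nu_1=\nu_2$ and $\sigma_1+\delta_1=\sigma_2+\delta_2$. Evaluating the latter equality on a singleton $\{x\}$ kills the continuous parts and yields $\delta_1(\{x\}) = \delta_2(\{x\})$ for every $x\in\mathbb{R}$. Since each $\delta_i$ is purely atomic and therefore determined by its values on singletons, this gives $\delta_1=\delta_2$, and then $\sigma_1=\sigma_2$. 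The only real subtlety, and hence the main obstacle, is verifying the countability of the atom set, which is precisely where $\sigma$-finiteness of $\rho$ enters; everything else is bookkeeping assembled from the classical decomposition theorem.
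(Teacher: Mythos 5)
Your proof is correct. Note, however, that the paper does not prove this theorem at all: it simply cites Hewitt--Stromberg (Theorem 19.61) and, in the subsequent remark, observes that the finite Borel measures actually used in the paper are automatically regular. So there is nothing to compare against except the standard textbook argument, which is exactly what you have reconstructed: Lebesgue--Radon--Nikodym to split off $\nu$, then extraction of the (countable) atom set of the singular part to separate $\delta$ from $\sigma$, with uniqueness following from the uniqueness in Lebesgue--Radon--Nikodym plus evaluation on singletons. Two small points are worth flagging. First, your opening claim that ``each compact interval has finite $\mu$-mass by regularity'' is really a matter of convention: inner/outer approximation alone does not force local finiteness, but in Hewitt--Stromberg's terminology (and in any reasonable reading of the theorem) finiteness on compacta is built into the definition of a regular Borel measure, and in the paper's actual application $\mu$ is finite anyway. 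Second, the statement asks that $\nu$, $\sigma$, $\delta$ themselves be regular; you do not check this, though it is automatic since any Borel measure on $\mathbb{R}$ that is finite on compact sets is regular, and each of your three pieces is dominated by $\mu$ on compacta. Neither point is a genuine gap.
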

\begin{remark}
\textnormal{According to \cite[Theorem A.2.2]{Ransford_1995}, if \(\mu\) is a finite positive Borel measure on a metric space \(X\), then \(\mu\) is a regular measure. Since \((-\pi,\pi)\) is a metric space with the euclidean metric, the measures throughout our paper are regular, and thus Theorem \(\ref{decomp}\) applies to them. Moreover, considering the absolutely continuous part \(\nu\) of a positive Borel measure \(\mu\), note that the Radon--Nikodym derivative \(f\) of \(\nu\) is Lebesgue integrable, since \(\nu\) is finite, and \(f\geq0\) a.e. on \([-\pi,\pi]\), since \(\nu\) is positive.}
\end{remark}
\subsection{Symmetrization and polarizations of measures}
The polarizations and the symmetric decreasing rearrangement of functions are well established notions in literature, and the reader can find the more general definitions of them in books such as \cite{Baernstein}. In our paper, since we work on the interval \([-\pi,\pi]\) and have Dirichlet boundary conditions, we can assume that the functions and measures with which we work are identically zero outside \([-\pi,\pi]\) (more details on the thought process that led to the definitions below can be found on \cite{egw}). With the assumptions described above, we can now present the definitions of s.d.r. and polarization in a way that better fits our context, but also respects the general definitions. Furthermore, we introduce the notion of symmetrization of Borel measures on \((-\pi,\pi)\) and we generalize the polarization of functions to the notion of polarization of measures that do not have a singular continuous part.
\begin{definition}[Symmetric decreasing rearrangement of functions and symmetrization of measures]\label{sdr}\quad
\begin{itemize}
    \item \textnormal{Let \(f\in L^1[-\pi,\pi]\).\ We define \(f^\ast:[0,2\pi]\to\overline{\mathbb{R}}\coloneqq\mathbb{R}\cup\{-\infty,+\infty\}\) as 
\[f^\ast(t)=
\begin{dcases}
\esssup f, & t=0,\\
\inf\bigl\{s\in\mathbb{R}:\lambda\bigl(\{x\in [-\pi,\pi]:f(x)>s\}\bigr)\leq t\bigr\}, &t\in(0,2\pi),\\
\essinf f, &t=2\pi.
\end{dcases}\]
The function \(f^\ast\) is called the decreasing rearrangement of \(f\).}
\par \textnormal{We also define \(f^\#:[-\pi,\pi]\to\overline{\mathbb{R}}\) the symmetric decreasing rearrangement of \(f\), given by \(f^\#(t)=f^\ast(2|t|)\) for \(t\in[-\pi,\pi]\).
\item Let \(\mu\) be a finite Borel measure on \((-\pi,\pi)\). Let \(\mu=\nu+\sigma+\delta\) be the decomposition of \(\mu\) as described in Theorem \(\ref{decomp}\). Let \(f\in L^1[-\pi,\pi]\) such that \(d\nu=fd\lambda\) and let \(M=\sigma\bigl((-\pi,\pi)\bigr)+\delta\bigl((-\pi,\pi)\bigr)\). We define the symmetrization of \(\mu\) to be the measure \(\mu^\#\), such that
\[d\mu^\#=f^\#d\lambda+Md\delta_0.\]}
\end{itemize}
\end{definition}
\begin{definition}[Polarization of functions]\label{defpolarfun} \textnormal{Let \(b\in(-\pi,0)\cup(0,\pi)\) and \(H\) be the open half-line of \(\mathbb{R}\setminus\{b\}\) which contains \(0\). Let \(f\in L^1[-\pi,\pi]\).
\begin{itemize}
    \item If \(b\in(0,\pi),\) we define the polarization (towards zero) of \(f\) with respect to \(b\) (or the polarization of \(f\) with respect to \(H=(-\infty,b)\)) as
    \[f_H(x)=
    \begin{dcases}
        f(x), &x\in[-\pi,2b-\pi),\\
        \max\{f(x),f(2b-x)\}, &x\in[2b-\pi,b],\\
        \min\{f(x),f(2b-x)\}, &x\in[b,\pi].
    \end{dcases}\]
    \item If \(b\in(-\pi,0),\) we define the polarization (towards zero) of \(f\) with respect to \(b\) (or the polarization of \(f\) with respect to \(H=(b,+\infty)\)) as
    \[f_H(x)=
    \begin{dcases}
        \min\{f(x),f(2b-x)\}, &x\in[-\pi,b],\\
        \max\{f(x),f(2b-x)\}, &x\in[b,2b+\pi],\\
        f(x), &x\in(2b+\pi,\pi].
    \end{dcases}\]
\end{itemize}}
\end{definition}
In a completely analogous way and under the previous assumptions for \(b\) and \(H\), one can define the polarization of a purely discontinuous measure \(\delta=\sum_i a_i\delta_{x_i}\) with respect to \(b\) to be the purely discontinuous measure \(\delta_H\) such that
\[\delta_H(\{x\})=\begin{dcases}
    \delta(\{x\}), &x\in(-\pi,2b-\pi),\\
    \max\{\delta(\{x\}),\delta(\{2b-x\})\}, &x\in[2b-\pi,b],\\
    \min\{\delta(\{x\}),\delta(\{2b-x\})\}, &x\in[b,\pi),
\end{dcases}\]
for \(b\in(0,\pi)\), and
\[\delta_H(\{x\})=\begin{dcases}
    \min\{\delta(\{x\}),\delta(\{2b-x\})\}, &x\in(-\pi,b],\\
    \max\{\delta(\{x\}),\delta(\{2b-x\})\}, &x\in[b,2b+\pi],\\
    \delta(\{x\}), &x\in(2b+\pi,\pi),
\end{dcases}\]
for \(b\in(-\pi,0)\).\\
As stated above, \(\delta_H\) is a purely discontinuous measure on \((-\pi,\pi)\) and it can be written as \(\delta_H=\sum_i a_i\delta_{\tilde{x}_i}\), where \(\tilde{x}_i=x_i\) in the case of \(\delta_H(\{x_i\})=\delta(\{x_i\})\), or \(\tilde{x}_i=2b-x_i\) if \(2b-x_i\in(-\pi,\pi)\) and \(\delta_H(\{2b-x_i\})=\delta(\{x_i\})\).\\
This leads to the following definition.
\begin{definition}[Polarization of a measure without singular continuous part]\label{polar measure}\textnormal{
Let \(\mu\) be a finite Borel measure on \((-\pi,\pi)\) that can be written as \(\mu=\nu+\delta\), where \(\delta\) is purely discontinuous and \(\nu\ll\lambda\) with \(d\nu=fd\lambda\) for some \(0\leq f\in L^1[-\pi,\pi]\). Let \(b\in(-\pi,0)\cup(0,\pi)\) . We define the polarization of \(\mu\) with respect to \(b\) to be the measure \(\mu_H=\nu_H+\delta_H\), where \(\nu_H\ll\lambda\) such that \(d\nu_H=f_H d\lambda\), and \(\delta_H\) is the purely discontinuous measure as described above.}
\end{definition}
\begin{example}\label{paradeigma}\textnormal{
Fix a point \(x_0\in[-\pi,\pi]\) and consider the Green's function \(G(x_0,\cdot):[-\pi,\pi]\to[0,+\infty)\) with one argument being constant. It is trivial to see that \(G(0,\cdot)\) is its own s.d.r and thus no polarization towards zero affects it. Assume that \(x_0\neq0\) and let \(b\in(-\pi,0)\cup(0,\pi)\). Let \(\bigl(G(x_0,\cdot)\bigr)_H\) be the polarization of \(G(x_0,\cdot)\) with respect to \(b\). If \(x_0<0\) (respectively \(x_0>0\)), then \(\bigl(G(x_0,\cdot)\bigr)_H\equiv G(x_0,\cdot)\) if and only if \(b\in(-\pi,x_0]\cup(0,\pi)\) (respectively \(b\in(-\pi,0)\cup[x_0,\pi)\)).}
\end{example}
We can now state the second main theorem of this paper. For its proof, see Subsection 4.2.
\begin{theorem}[Polarization and convex integral means]\label{polarconvex}
Let \(\mu\) be a finite Borel measure on \((-\pi,\pi)\) with Lebesgue--Radon--Nikodym decomposition that does not include a singular continuous part. Let \(b\in(-\pi,0)\cup(0,\pi)\). Let \(\mu_H\) be the polarization of \(\mu\) with respect to \(b\). Let \(u_\mu\) and \(u_{\mu_H}\) denote the corresponding solutions to the Dirichlet problem \((\ref{Poisson})\), \((\ref{Dirichlet})\). Then
\begin{equation}\label{ineq polar}
\int_{-\pi}^\pi \phi\bigl(u_\mu(x)\bigr)dx\leq\int_{-\pi}^\pi \phi\bigl(u_{\mu_H}(x)\bigr)dx
\end{equation}
for any convex and increasing function \(\phi:\mathbb{R}\to\mathbb{R}.\)
\par Furthermore, if \(\phi\) is strictly increasing, then \((\ref{ineq polar})\) holds as an equality if and only if \(\mu=\mu_H\) on \((-\pi,\pi)\).
\end{theorem}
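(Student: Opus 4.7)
The plan is to combine pointwise comparisons of $u_\mu$ and $u_{\mu_H}$, obtained from the Green's function representation, with a super-majorization argument. Without loss of generality take $b\in(0,\pi)$ (the case $b\in(-\pi,0)$ reduces to this by the reflection $x\mapsto -x$). Let $\tau(z)=2b-z$ and partition $[-\pi,\pi]=A\sqcup B\sqcup B'$ with $A=[-\pi,2b-\pi)$, $B=[2b-\pi,b]$, $B'=[b,\pi]$; the map $\tau$ exchanges $B$ and $B'$, while $A$ is left alone by the polarization. Using Proposition~\ref{Representation} and the definition of $\mu_H$, a change of variables $y\mapsto\tau(y)$ on $B'$ produces the representation
\[
u_{\mu_H}(x)-u_\mu(x)=\int_{B}\bigl[G(x,y)-G(x,\tau(y))\bigr]\,d\rho(y),
\]
where $d\rho$ is a non-negative measure encoding the transferred mass (for the absolutely continuous part, $d\rho(y)=(f(\tau(y))-f(y))^{+}\,dy$, with an analogous discrete contribution).

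The heart of the proof is a set of pointwise inequalities that follow from (\ref{Green}). Direct computation yields
\[
G(x,y)-G(x,\tau(y))=(b-y)\Bigl[\tfrac{x}{\pi}+\tfrac{2(b-x)}{|x+y-2b|+|x-y|}\Bigr]
\]
together with the algebraic identities $G(x,y)-G(\tau(x),\tau(y))=\tfrac{b(2b-x-y)}{\pi}$ and $G(x,\tau(y))-G(\tau(x),y)=\tfrac{b(y-x)}{\pi}$. Using these identities (together with the bound $|x-y|+|2b-x-y|\le 2\pi-2b$ on $B\times B$, needed to handle competing signs when $y<0$), a case analysis on whether $f(y)\gtrless f(\tau(y))$ gives, for every $x\in B$: $u_{\mu_H}(x)\ge\max\{u_\mu(x),u_\mu(\tau(x))\}$; $u_{\mu_H}(\tau(x))\le\min\{u_\mu(x),u_\mu(\tau(x))\}$; and the sum inequality $u_{\mu_H}(x)+u_{\mu_H}(\tau(x))\ge u_\mu(x)+u_\mu(\tau(x))$. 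The simpler bound $u_{\mu_H}(x)\ge u_\mu(x)$ also holds on $A$.

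Setting $a_1=u_{\mu_H}(x)$, $a_2=u_{\mu_H}(\tau(x))$, $M_1=\max\{u_\mu(x),u_\mu(\tau(x))\}$, $M_2=\min\{u_\mu(x),u_\mu(\tau(x))\}$ for $x\in B$, the above inequalities say that $(a_1,a_2)$ \emph{super-majorizes} $(M_1,M_2)$, in the sense that $a_1\ge M_1\ge M_2\ge a_2$ and $a_1+a_2\ge M_1+M_2$. A super-majorization lemma---proved by the chain $\phi(a_1)-\phi(M_1)\ge\phi'_+(M_1)(a_1-M_1)\ge\phi'_+(M_1)(M_2-a_2)\ge\phi'_-(M_2)(M_2-a_2)\ge\phi(M_2)-\phi(a_2)$, valid for any convex non-decreasing $\phi$---then gives $\phi(a_1)+\phi(a_2)\ge\phi(M_1)+\phi(M_2)$. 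Integrating over $B$, changing variables $x\mapsto\tau(x)$ on $B'$, and adding the pointwise bound on $A$ yields (\ref{ineq polar}). For the equality case with $\phi$ strictly increasing, equality forces $\phi(u_{\mu_H})=\phi(u_\mu)$ a.e.\ on $A$, hence $u_{\mu_H}\equiv u_\mu$ on $A$ by continuity; since the kernel $G(x,y)-G(x,\tau(y))$ is strictly positive on $(A\setminus\{-\pi\})\times(B\setminus\{b\})$, the representation formula forces $\rho\equiv0$ (with the discrete part handled identically), whence $\mu=\mu_H$. The principal technical obstacle is the case-by-case verification of the pointwise inequalities on $B$, especially the cross-inequality $u_{\mu_H}(x)\ge u_\mu(\tau(x))$, which hinges on the identity $[G(x,y)-G(\tau(x),y)]+[G(x,\tau(y))-G(\tau(x),\tau(y))]=\tfrac{2b(b-x)}{\pi}$ to compensate the sign of $G(x,\tau(y))-G(\tau(x),\tau(y))$ when this quantity is negative.
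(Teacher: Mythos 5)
Your overall architecture --- pointwise comparison of $u_\mu$ and $u_{\mu_H}$ on each reflected pair $\{x,\tau(x)\}$, a two-point convexity lemma, and integration over $B$ together with the pointwise bound on $A=[-\pi,2b-\pi)$ --- is essentially the paper's own route (its Lemmas \ref{karamata}--\ref{karamata u d} and Proposition \ref{Karamataphi}). Your representation of $u_{\mu_H}-u_\mu$ through the transferred-mass measure $\rho$, the three algebraic identities for $G$, and the equality-case argument via strict positivity of the kernel $G(x,y)-G(x,\tau(y))$ on $(A\setminus\{-\pi\})\times(B\setminus\{b\})$ all check out.

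However, one of the three pointwise inequalities you assert is false, and your super-majorization chain relies on it. The claim $u_{\mu_H}(\tau(x))\le\min\{u_\mu(x),u_\mu(\tau(x))\}$ for $x\in B$ fails in general. Take $b=\pi/2$ and $\mu=\delta_{0.9\pi}$ (purely discontinuous, hence admissible); then $\mu_H=\delta_{0.1\pi}$. For $x=0.4\pi\in B$, $\tau(x)=0.6\pi$, one computes $u_{\mu_H}(\tau(x))=G(0.6\pi,0.1\pi)=0.22\pi$, whereas $u_\mu(x)=G(0.4\pi,0.9\pi)=0.07\pi$ and $u_\mu(\tau(x))=G(0.6\pi,0.9\pi)=0.08\pi$; by continuity the claimed inequality fails on a set of positive measure. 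Your chain $\phi'_+(M_1)(a_1-M_1)\ge\phi'_+(M_1)(M_2-a_2)\ge\phi'_-(M_2)(M_2-a_2)\ge\phi(M_2)-\phi(a_2)$ uses $M_2\ge a_2$ in its last two steps, so it breaks exactly there. The gap is repairable, and the repair is precisely the paper's Lemma \ref{karamata}: the two-point inequality $\phi(a_1)+\phi(a_2)\ge\phi(M_1)+\phi(M_2)$ needs only max-domination $a_1\ge M_1$ (which you do establish) and sum-domination $a_1+a_2\ge M_1+M_2$; when $a_2\ge M_2$ the conclusion is immediate from monotonicity of $\phi$, and when $a_2<M_2$ your chain applies verbatim. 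Drop the false min-inequality, weaken the hypotheses of your two-point lemma accordingly, and the remainder of your argument, including the equality case, goes through.
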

\subsection{Hardy-Littlewood inequalities}
The next two propositions are special cases of the Hardy-Littlewood inequalities for the polarization and the symmetric decreasing rearrangement, respectively. The reader can find them in their most general form as Theorems 2.9 and 2.15 of \(\cite{Baernstein}\).
\begin{proposition}\label{hardy polar}
Let \(0\leq f,g\in L^1[-\pi,\pi]\). Let \(b\in(-\pi,0)\cup(0,\pi)\) and \(H\) be the open half-line of \(\mathbb{R}\) that contains 0. Let \(f_H\) and \(g_H\) denote the polarizations of \(f\) and \(g\) with respect to \(b\). Then
\[\int_{-\pi}^\pi f(y)g(y)dy\leq\int_{-\pi}^\pi f_H(y)g_H(y)dy.\]
Equality holds if and only if the set \(A_H=B_H\cup C_H\) has zero Lebesgue measure, where
\[B_H=\{y\in H\cap[-\pi,\pi]:f(y)<f(2b-y)\text{ and }g(y)>g(2b-y)\}\]
and
\[C_H=\{y\in H\cap[-\pi,\pi]:f(y)>f(2b-y)\text{ and }g(y)<g(2b-y)\}.\]
\end{proposition}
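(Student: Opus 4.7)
The plan is to reduce the global inequality to an elementary four-variable pointwise inequality by exploiting the involutive structure of the reflection $y \mapsto 2b-y$. I treat the case $b\in(0,\pi)$; the case $b\in(-\pi,0)$ is symmetric.

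First I would observe that by Definition~\ref{defpolarfun}, $f=f_H$ and $g=g_H$ pointwise on $[-\pi,2b-\pi)$, so the integrals in the inequality agree on that subinterval and only the contribution over $[2b-\pi,\pi]$ has to be compared. I would then split this remaining integral as
\[
\int_{2b-\pi}^{\pi} f_H g_H\,dy=\int_{2b-\pi}^{b} f_H(y)g_H(y)\,dy+\int_{b}^{\pi} f_H(y)g_H(y)\,dy,
\]
and perform the change of variables $y\mapsto 2b-y$ in the second integral, which is a self-map $[b,\pi]\to[2b-\pi,b]$ of Jacobian one. Applying the same split to $\int f g$, the claim reduces to showing that, for almost every $y\in[2b-\pi,b]$,
\[
f(y)g(y)+f(2b-y)g(2b-y)\le f_H(y)g_H(y)+f_H(2b-y)g_H(2b-y).
\]

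By the definitions, for $y\in[2b-\pi,b]$ we have $f_H(y)=\max\{f(y),f(2b-y)\}$ and $f_H(2b-y)=\min\{f(y),f(2b-y)\}$, and analogously for $g_H$. Setting $a_1=f(y)$, $a_2=f(2b-y)$, $c_1=g(y)$, $c_2=g(2b-y)$, the pointwise inequality reduces to the elementary statement: for nonnegative reals,
\[
a_1c_1+a_2c_2\le \max\{a_1,a_2\}\max\{c_1,c_2\}+\min\{a_1,a_2\}\min\{c_1,c_2\}.
\]
This is straightforward: WLOG $a_1\le a_2$; if also $c_1\le c_2$ the two sides coincide, while if $c_1>c_2$ the difference of the two sides equals $(a_2-a_1)(c_1-c_2)\ge 0$. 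Integrating over $y\in[2b-\pi,b]$ establishes the inequality.

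Tracking when the above chain is saturated gives the equality case. The pointwise bound is strict precisely on the set of $y\in[2b-\pi,b]$ with $a_1\ne a_2$ and $c_1\ne c_2$ ordered oppositely, i.e.\ on exactly the intersection of $B_H\cup C_H$ with $[2b-\pi,b]$. Outside $[2b-\pi,b]$, since $f,g\ge 0$ are taken to be zero outside $[-\pi,\pi]$, a point $y\in[-\pi,2b-\pi)$ lies in $B_H$ only if $f(y)<f(2b-y)=0$ or similarly for $g$, which is impossible; so $A_H\subset[2b-\pi,b]$ up to a null set and integrating yields equality iff $\lambda(A_H)=0$. The case $b\in(-\pi,0)$ is handled identically, with the twinned interval being $[b,2b+\pi]$. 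The main (though modest) bookkeeping step is verifying the orientation of the change of variables and the location of $A_H$ in each case; beyond that, the argument is the standard two-point rearrangement trick.
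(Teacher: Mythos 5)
Your proof is correct. The paper does not prove this proposition itself --- it defers to Theorem 2.9 of \cite{Baernstein} --- and your argument (splitting off the fixed part of the domain, folding $[b,\pi]$ onto $[2b-\pi,b]$ by $y\mapsto 2b-y$, and invoking the two-point inequality $a_1c_1+a_2c_2\le \max\{a_1,a_2\}\max\{c_1,c_2\}+\min\{a_1,a_2\}\min\{c_1,c_2\}$ with its exact equality case) is the standard proof given in that reference, including the observation that $A_H$ meets $[-\pi,2b-\pi)$ only in a null set because $f,g\ge 0$ vanish outside $[-\pi,\pi]$.
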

\begin{proposition}\label{hardy sdr}
Let \(0\leq f,g\in L^1[-\pi,\pi]\), and let \(f^\#\) and \(g^\#\) be their symmetric decreasing rearrangements. Then
\[\int_{-\pi}^\pi f(y)g(y)dy\leq\int_{-\pi}^\pi f^\#(y)g^\#(y)dy.\]
Equality holds if and only if the set
\[A=\bigl\{(x,y)\in[-\pi,\pi]^2:f(x)<f(y)\text{ and }g(x)>g(y)\bigr\}\]
has zero two-dimensional Lebesgue measure.
\end{proposition}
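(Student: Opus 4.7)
The plan is to reduce the inequality and its equality case to the geometry of super-level sets via the layer-cake formula. Writing $f(y)=\int_0^\infty\chi_{\{f>s\}}(y)\,ds$ and $g(y)=\int_0^\infty\chi_{\{g>t\}}(y)\,dt$ and invoking Fubini gives
\[
\int_{-\pi}^\pi f(y)g(y)\,dy=\int_0^\infty\!\!\int_0^\infty\lambda\bigl(\{f>s\}\cap\{g>t\}\bigr)\,ds\,dt,
\]
and analogously for $f^\# g^\#$. By the definition of s.d.r., the sets $\{f^\#>s\}$ and $\{g^\#>t\}$ are symmetric open intervals centered at the origin of Lebesgue measures $F(s):=\lambda(\{f>s\})$ and $G(t):=\lambda(\{g>t\})$, so one is contained in the other and $\lambda(\{f^\#>s\}\cap\{g^\#>t\})=\min(F(s),G(t))$. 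Since $\lambda(E_1\cap E_2)\leq\min(\lambda(E_1),\lambda(E_2))$ always holds, integrating in $(s,t)$ gives the inequality.

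For the equality case, I would rewrite the non-negative defect as
\[
\min\bigl(F(s),G(t)\bigr)-\lambda\bigl(\{f>s\}\cap\{g>t\}\bigr)=\min\bigl(\lambda(B_{s,t}),\lambda(C_{s,t})\bigr),
\]
where $B_{s,t}=\{f>s\}\setminus\{g>t\}$ and $C_{s,t}=\{g>t\}\setminus\{f>s\}$. Thus equality in the Hardy--Littlewood inequality holds if and only if $\lambda(B_{s,t})\,\lambda(C_{s,t})=0$ for a.e.\ $(s,t)\in(0,\infty)^2$. The bridge to the set $A$ is the Fubini identity
\[
\int_0^\infty\!\!\int_0^\infty\lambda(B_{s,t})\,\lambda(C_{s,t})\,ds\,dt=\iint_A\bigl(f(y)-f(x)\bigr)\bigl(g(x)-g(y)\bigr)\,dx\,dy,
\]
which I would derive by expanding $\lambda(B_{s,t})\lambda(C_{s,t})=\iint\chi_{B_{s,t}}(y)\chi_{C_{s,t}}(x)\,dx\,dy$ and swapping the order of integration: for fixed $(x,y)$, the set of parameters $(s,t)$ for which $y\in B_{s,t}$ and $x\in C_{s,t}$ is exactly the rectangle $(f(x),f(y))\times(g(y),g(x))$, which is non-degenerate precisely when $(x,y)\in A$ and has area $(f(y)-f(x))(g(x)-g(y))$. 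Since this integrand is strictly positive on $A$, the right-hand side vanishes if and only if $\lambda^2(A)=0$, closing the equivalence.

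The main obstacle is the careful bookkeeping in the equality argument: the defect produced by the layer-cake side involves $\min(\lambda(B_{s,t}),\lambda(C_{s,t}))$, whereas the identity above involves the product $\lambda(B_{s,t})\,\lambda(C_{s,t})$. Both vanish a.e.\ under exactly the same condition, since for non-negative reals $\min=0$ if and only if the product is $0$; this is what lets the chain of equivalences close. The Fubini exchange itself, and the measurability of $B_{s,t},C_{s,t}$, are routine once everything is written out in indicator form.
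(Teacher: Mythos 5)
Your argument is correct, and note that the paper does not actually prove this proposition: it is quoted as a special case of Theorems 2.9 and 2.15 of Baernstein's book, so there is no internal proof to compare against; your layer-cake derivation supplies a self-contained one. The key steps all check out: the identity $\min(F(s),G(t))-\lambda(\{f>s\}\cap\{g>t\})=\min(\lambda(B_{s,t}),\lambda(C_{s,t}))$ follows from writing $F(s)=\lambda(\{f>s\}\cap\{g>t\})+\lambda(B_{s,t})$ and similarly for $G(t)$; the equivalence between a.e.\ vanishing of the min and of the product is immediate for nonnegative quantities; and the Tonelli computation identifying $\int_0^\infty\int_0^\infty\lambda(B_{s,t})\lambda(C_{s,t})\,ds\,dt$ with $\iint_A\bigl(f(y)-f(x)\bigr)\bigl(g(x)-g(y)\bigr)\,dx\,dy$ is exactly right, since the parameter set for fixed $(x,y)$ is the rectangle $[f(x),f(y))\times[g(y),g(x))$, which lies in $[0,\infty)^2$ because $f,g\geq0$. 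The one caveat is the step from equality of the two integrals to a.e.\ vanishing of the pointwise defect: that subtraction is only legitimate when $\int f^\#g^\#<\infty$. For general $f,g\in L^1$ the product need not be integrable, and if both sides equal $+\infty$ the equality characterization can genuinely fail even though the inequality survives in $[0,+\infty]$. You should either add the hypothesis $\int f^\#g^\#<\infty$ — which is how such equality statements are usually formulated, and which holds automatically in every application in this paper, where one of the two functions is the bounded Green's function — or note explicitly that you are working in that case.
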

We can now prove a preliminary symmetrization result.
\begin{theorem}\label{theorem max}
Let \(\mu\) be a finite Borel measure on \((-\pi,\pi)\) and let \(\mu^\#\) be its symmetrization. Let \(u_\mu\) and \(u_{\mu^\#}\) be the solutions to the corresponding Dirichlet problems. Then
\begin{equation}\label{max}u_{\mu^\#}(0)=\max_{[-\pi,\pi]} u_{\mu^\#}\geq \max _{[-\pi,\pi]}u_\mu.\end{equation}
Equality holds if and only if \(\mu=\mu^\#\).
\end{theorem}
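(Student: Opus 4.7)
The plan is to combine the Green's function representation $(\ref{u_mu})$ with the Hardy--Littlewood rearrangement inequality (Proposition \ref{hardy sdr}) and an explicit computation of the s.d.r.\ of $G(x,\cdot)$.

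First, I would establish that $u_{\mu^\#}$ attains its maximum at $0$. Because $\mu^\#=f^\# d\lambda+M d\delta_0$ is invariant under the reflection $y\mapsto-y$ and $G(-x,-y)=G(x,y)$ is immediate from $(\ref{Green})$, the representation $(\ref{u_mu})$ shows that $u_{\mu^\#}$ is even on $[-\pi,\pi]$. Moreover $-u_{\mu^\#}''=\mu^\#\geq 0$ in the sense of distributions, so $u_{\mu^\#}$ is concave. A continuous, even, concave function on $[-\pi,\pi]$ that vanishes at $\pm\pi$ is maximized at $0$; this yields the first equality in $(\ref{max})$.

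Next I would prove the pointwise bound $u_\mu(x)\leq u_{\mu^\#}(0)$ for every $x\in[-\pi,\pi]$. Splitting $d\mu=f\,d\lambda+d\sigma+d\delta$ splits the integral $(\ref{u_mu})$ into three parts. The key computational step is the identity
\[G(x,\cdot)^\#(y)=\frac{\pi^2-x^2}{\pi^2}\,G(0,y)\leq G(0,y),\]
which I would derive by observing that, for fixed $x$, $G(x,\cdot)$ is an asymmetric tent function with peak $(\pi^2-x^2)/(2\pi)$ at $y=x$ and zero boundary values, computing the length $2\pi-\tfrac{4\pi^2 s}{\pi^2-x^2}$ of its superlevel sets, and inverting. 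Combining this identity with Proposition \ref{hardy sdr} applied to $G(x,\cdot)$ and $f$ yields
\[\int_{-\pi}^\pi G(x,y)f(y)\,dy\leq\int_{-\pi}^\pi G(x,\cdot)^\#(y)f^\#(y)\,dy\leq\int_{-\pi}^\pi G(0,y)f^\#(y)\,dy.\]
For the singular part I use the global bound $G(x,y)\leq G(0,0)=\pi/2$ to obtain $\int G(x,y)\,d(\sigma+\delta)(y)\leq\tfrac{\pi}{2}(\sigma+\delta)\bigl((-\pi,\pi)\bigr)=M\,G(0,0)$. Summing the two estimates gives $u_\mu(x)\leq u_{\mu^\#}(0)$.

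For the equality case, I would suppose $\max u_\mu=u_{\mu^\#}(0)$ and let $x_0\in[-\pi,\pi]$ realize the maximum of $u_\mu$; equality must then propagate through both chains above. In the singular estimate, $(\sigma+\delta)$ must concentrate on $\{y:G(x_0,y)=\pi/2\}$; since $G<\pi/2$ off $(0,0)$, this forces either $\sigma+\delta=0$ or else $x_0=0$ with $\sigma+\delta$ supported at $\{0\}$, so continuity of $\sigma$ gives $\sigma=0$ and $\delta=c\delta_0$. In the absolutely continuous chain, equality in the second inequality reads $\tfrac{x_0^2}{\pi^2}\int G(0,y)f^\#(y)\,dy=0$, forcing $x_0=0$ or $f\equiv 0$, while equality in the Hardy--Littlewood step with the strictly symmetric decreasing kernel $G(0,\cdot)$ forces $f=f^\#$ a.e.\ by the equality clause of Proposition \ref{hardy sdr}. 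A short case analysis (separating $f\equiv 0$ from $f\not\equiv 0$) then gives $\mu=\mu^\#$ in every scenario, and the converse direction is immediate. The main obstacle is the rearrangement identity $G(x,\cdot)^\#=\tfrac{\pi^2-x^2}{\pi^2}G(0,\cdot)$, which is the geometric heart of the argument: without it, no pointwise comparison between the symmetrized solution at $0$ and the original solution at an arbitrary point would be available.
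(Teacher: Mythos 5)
Your proposal is correct and shares the paper's skeleton (decompose $\mu$, use the representation $(\ref{u_mu})$, bound the absolutely continuous part via Hardy--Littlewood and the singular part via the global maximum of $G$), but it diverges in three places worth noting. First, the paper derives $u_{\mu^\#}(0)=\max u_{\mu^\#}$ by applying the main inequality to $\mu^\#$ itself (using $(\mu^\#)^\#=\mu^\#$), whereas you get it from evenness plus distributional concavity; both work. Second, for the equality case of the singular part you treat $\sigma+\delta$ together and argue by support: equality forces the measure to live on $\{y:G(x_0,y)=G(0,0)\}$, which is empty for $x_0\neq0$ and equals $\{0\}$ for $x_0=0$, so continuity of $\sigma$ kills it. This is cleaner than the paper's two-step treatment, which handles $\delta$ pointwise and runs a separate $\varepsilon$-argument for $\sigma$. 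Third, and this is the one place you should add a line: for the absolutely continuous equality case the paper deliberately inserts a polarization step (Proposition \ref{hardy polar}) so that it can quote Baernstein's Lemma 1.38 ($f\neq f^\#$ a.e.\ implies some polarization moves $f$) and then compute the set $A_H$ explicitly. You instead invoke the equality clause of Proposition \ref{hardy sdr} with the strictly symmetric decreasing kernel $G(0,\cdot)$; that clause only tells you $\lambda_2\bigl(\{(x,y):f(x)<f(y),\ |x|<|y|\}\bigr)=0$, and the implication from this to $f=f^\#$ a.e.\ is true but not contained in the proposition as stated -- it needs a short Fubini argument on superlevel sets (if $\lambda(\{f>s\})=t$ but $(-t/2,t/2)\setminus\{f>s\}$ had positive measure, the excluded set would have positive $\lambda_2$-measure). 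With that sentence supplied, your argument is complete; your rearrangement identity $G(x,\cdot)^\#=\tfrac{\pi^2-x^2}{\pi^2}G(0,\cdot)$ agrees with the paper's $G^\#(x_0,\cdot)=\tfrac{G(x_0,x_0)}{\pi/2}G(0,\cdot)$, and your final case analysis correctly uses the standing assumption $\mu\neq0$ to exclude the branch $x_0\neq0$.
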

Of course, \((\ref{max})\) could be seen as a consequence of Theorem \ref{sdrconvex} (see \((\ref{inequality of Lp norms eisawgh})\)). In Section 5, however, we will use \((\ref{max})\) to prove Theorem \ref{sdrconvex}.
\begin{proof}
All solutions to the Dirichlet problem vanish on \(\pm\pi\), and so we have nothing of value to show for the ends of the interval. Fix \(x_0\in(-\pi,\pi)\). Consider the decomposition \(\mu=\nu+\sigma+\delta\), where \(d\nu=fd\lambda\) for some \(0\leq f\in L^1[-\pi,\pi]\), \(\sigma\) is a continuous measure on \((-\pi,\pi)\) that is singular with respect to \(\lambda\) and \(\delta=\sum_i a_i \delta_{x_i}\) is a purely discontinuous measure, as described in Definition \ref{decompdef}. Recall that \(\mu\) is not the zero measure; so at least one of the measures
\(\nu\), \(\sigma\), \(\delta\) is not zero. Then
\begin{align*}
u_{\mu}(x_0)&=u_f(x_0)+u_\sigma(x_0)+u_\delta(x_0)\\
&=\int_{-\pi}^\pi G(x_0,y)f(y)dy+\int_{-\pi}^\pi G(x,y)d\sigma(y)+\sum_{i}a_i G(x_0,x_i).
\end{align*}
Note that 
\begin{equation}\label{sdr ineq green} 
G^\#(x_0,y)\leq G(0,y) \text{ for all }y\in[-\pi,\pi]
\end{equation}
and equality holds for any \(y\in(-\pi,\pi)\) if and only if \(x_0=0\). To verify this, we use the definition of s.d.r. to see that \(G(0,y)=(\pi-|y|)/2=G^\#(0,y)\) and \(\displaystyle G^\#(x_0,y)=\frac{G(x_0,x_0)}{\pi/2}G(0,y)\).
\par First, we will prove that \(u_f(x_0)\leq u_{f^\#}(0)\) for all \(x_0\in(-\pi,\pi)\), with the equality holding if and only if \(f=f^\#\) a.e. on \((-\pi,\pi)\). Let \(b\in(-\pi,0)\cup(0,\pi)\) and let \(f_H\) be the polarization of \(f\) with respect to \(b\). Let \(f^\#\) be the s.d.r of \(f\). Applying Propositions \ref{hardy polar} and \ref{hardy sdr} and using \((\ref{sdr ineq green})\), we get the following inequalities
\begin{align}\displaystyle
\label{efarmogh hardy polar}    u_f(x_0)&\leq\int_{-\pi}^\pi G_H(x_0,y)f_H(y)dy\\
\label{efarmogh hardy sdr}    &\leq\int_{-\pi}^\pi (G_H)^\#(x_0,y)(f_H)^\#(y)dy=\int_{-\pi}^\pi G^\#(x_0,y)f^\#(y)\\
\label{max Green austhro}    &\leq\int_{-\pi}^\pi G(0,y)f^\#(y)dy=u_{f^\#}(0),
\end{align}
So, 
\begin{equation}\label{max of u f}
u_f(x_0)\leq u_{f^\#}(0)=\max u_{f^\#}\text{ for all }x_0\in(-\pi,\pi).
\end{equation}
\par To show the uniqueness statement of the theorem for the absolutely continuous part of \(\mu\), we work as follows. Assume that \(\nu\) is not zero, then \(f\), \(f_H\) and \(f^\#\) are not a.e. equal to zero.  If \(x_0\neq0\), then \((\ref{max Green austhro})\) holds as a strict inequality, since \(f^\#\) cannot be identically zero a.e. on \((-\pi,\pi)\) and \(G^\#(x_0,y)\leq G(0,y)\) for all \(y\in[-\pi,\pi]\), where the equality holds if and only if \(x_0=0\). So, for \(x_0\neq0\), \((\ref{max of u f})\) holds as a strict inequality. Let \(x_0=0\) and assume that \(f\) is not a.e. equal to \(f^\#\). Then, by \cite[Lemma 1.38]{Baernstein}, there exists some \(b\in(-\pi,0)\cup(0,\pi)\) such that \(f\) is not a.e. equal to \(f_H\), where \(f_H\) is the polarization of \(f\) with respect to \(b\). Let us compute the set \(A_H\) of Proposition \ref{hardy polar} for \(g(y)=G(0,y)\). It is almost trivial to see that \(G(0,y)>G(0,2b-y)\) for all \(y\in H\), where \(H\) is as defined in Definition \ref{defpolarfun}. Thus
\begin{align*}
B_H&=\{y\in H\cap[-\pi,\pi]:f(y)<f(2b-y)\text{ and }G(0,y)>G(0,2b-y)\}\\
&=\{y\in H\cap[-\pi,\pi]:f(y)<f(2b-y)\}
\end{align*}
and
\[C_H=\{y\in H\cap[-\pi,\pi]:f(y)>f(2b-y)\text{ and }G(0,y)<G(0,2b-y)\}
=\varnothing.\]
So \(A_H=\{y\in H:f(y)<f(2b-y)\}\).
The condition that \(\lambda(A_H)=0\) turns out to be equivalent to \(f(y)\geq f(2b-y)\) almost everywhere on \(H\) which is equivalent to \(f=f_H\) a.e. on \(H.\) But, as mentioned above, \(f\) is not a.e. equal to \(f_H\), thus \(\lambda(A_H)>0\) and, by Proposition \ref{hardy polar}, \((\ref{efarmogh hardy polar})\), and subsequently \((\ref{max of u f})\), holds as a strict inequality.
\par Next, we will show that \(u_\delta(x_0)\leq u_{\delta^\#}(0)\) for all \(x_0\in(-\pi,\pi)\), with the equality holding if and only if \(\delta=\delta^\#\) on \((-\pi,\pi)\). Since \(G(x,y)\) attains maximum only for \(x=y=0\), we have
\begin{align}\label{max of u d}
\nonumber u_\delta(x_0)&=\sum_i a_i G(x_0,x_i)\\
&\leq\sum_i a_i G(0,0)
=G(0,0)\sum_i a_i=u_{M\delta_0}(0),
\end{align}
where \(M=\sum_i a_i=\delta((-\pi,\pi))\). By Definition \ref{sdr}, we have \(M\delta_0=\delta^\#\), thus
\begin{equation}\label{max of u delta}
u_\delta(x_0)\leq u_{\delta^\#}(0)=\max u_{\delta^\#}\text{ for all }x_0\in(-\pi,\pi).
\end{equation}
\par Assume that \(\delta\) is not zero. If \(x_0\neq0\) or any \(x_i\) is different from the point zero, then \((\ref{max of u d})\) holds as a strict inequality, since \(G(0,0)\) is the unique global maximum of \(G\). Therefore, \((\ref{max of u delta})\) cannot hold as an equality if \(\delta\neq\delta^\#\).
\par Finally, for the singular continuous part \(\sigma\) of \(\mu\), we will use yet another approach to show that \(u_\sigma(x_0)\leq u_{\sigma^\#}(0)\) for all \(x_0\in(-\pi,\pi)\), and in fact we will see that if \(\sigma\) is not the zero measure, then the inequality is sharp. Since \(G(x,y)\) attains global maximum \(G(0,0)\), we have 
\begin{align*}
u_\sigma(x_0)&=\int_{-\pi}^\pi G(x_0,y)d\sigma(y)\leq\int_{-\pi}^\pi G(0,0)d\sigma(y)\\
&=G(0,0)\sigma\bigl((-\pi,\pi)\bigr)=G(0,0)\sigma^\#\bigl((-\pi,\pi)\bigr)=u_{\sigma^\#}(0).
\end{align*}
Therefore
\begin{equation}\label{max of u sigma}
u_\sigma(x_0)\leq u_{\sigma^\#}(0)=\max u_{\sigma^\#}\text{ for all }x_0\in(-\pi,\pi).
\end{equation}
Combining inequalities \((\ref{max of u sigma})\), \((\ref{max of u f})\) and \((\ref{max of u delta})\), we get the inequality \((\ref{max})\).
\par  Assume that \(\sigma\) is not zero. We will show that inequality \((\ref{max of u sigma})\) is strict, which implies that \((\ref{max})\) is also strict. Note that \(\sigma^\#\) is the only measure on \((-\pi,\pi)\) such that \[\sigma^\#\bigl((-\varepsilon,\varepsilon)\bigr)=\sigma^\#\bigl((-\pi,\pi)\bigr)=\sigma\bigl((-\pi,\pi)\bigr)\text{ for all }0<\varepsilon\leq\pi.\]
Note also that \(\sigma\neq\sigma^\#\), since \(\sigma\) is a continuous measure and \(\sigma^\#\) is a Dirac mass at zero, and thus \(\mu\neq\mu^\#\). Moreover, there exists \(\varepsilon_1>0\) such that \(\sigma\bigl((-\pi,\pi)\setminus(-\varepsilon_1,\varepsilon_1)\bigr)>0\) and this inequality also holds for every \(\varepsilon\in(0,\varepsilon_1)\).\\
If \(x_0\neq0\), then for every \(0<\varepsilon<\min\{\varepsilon_1,|x_0|\}\) we have \(G(x_0,y)\leq G(x_0,x_0)\) for every \(y\in(-\pi,\pi)\setminus(-\varepsilon,\varepsilon)\) and, thus
\begin{align}
\nonumber u_\sigma(x_0)&=\int_{(-\varepsilon,\varepsilon)} \kern-0.5cm G(x_0,y)d\sigma(y)+\int_{(-\pi,\pi)\setminus(-\varepsilon,\varepsilon)}\kern-1.6cmG(x_0,y)d\sigma(y)\\
\nonumber&\leq\int_{(-\varepsilon,\varepsilon)} \kern-0.5cm G(0,0)d\sigma(y)+\int_{(-\pi,\pi)\setminus(-\varepsilon,\varepsilon)}\kern-1.6cmG(x_0,x_0)d\sigma(y)\\
\nonumber&=G(0,0)\sigma\bigl((-\varepsilon,\varepsilon)\bigr)+G(x_0,x_0)\sigma\bigl((-\pi,\pi)\setminus(-\varepsilon,\varepsilon)\bigr)\\
\label{max u sigma}&<G(0,0)\sigma\bigl((-\varepsilon,\varepsilon)\bigr)+G(0,0)\sigma\bigl((-\pi,\pi)\setminus(-\varepsilon,\varepsilon)\bigr)=u_{\sigma^\#}(0).
\end{align}
Inequality \((\ref{max u sigma})\) is strict because \(x_0\neq0.\)\\
If \(x_0=0\), we have \(G(0,y)\leq G(0,\varepsilon_1)\) for every \(y\in(-\pi,\pi)\setminus(-\varepsilon_1,\varepsilon_1)\) and thus
\begin{align}
\nonumber u_\sigma(0)&=\int_{(-\varepsilon_1,\varepsilon_1)} \kern-0.7cm G(0,y)d\sigma(y)+\int_{(-\pi,\pi)\setminus(-\varepsilon_1,\varepsilon_1)}\kern-1.8cmG(0,y)d\sigma(y)\\
\nonumber&\leq\int_{(-\varepsilon_1,\varepsilon_1)} \kern-0.7cm G(0,0)d\sigma(y)+\int_{(-\pi,\pi)\setminus(-\varepsilon_1,\varepsilon_1)}\kern-1.8cmG(0,\varepsilon_1)d\sigma(y)\\
\nonumber&=G(0,0)\sigma\bigl((-\varepsilon_1,\varepsilon_1)\bigr)+G(0,\varepsilon_1)\sigma\bigl((-\pi,\pi)\setminus(-\varepsilon_1,\varepsilon_1)\bigr)\\
\label{u sigma mhden}&<G(0,0)\sigma\bigl((-\varepsilon_1,\varepsilon_1)\bigr)+G(0,0)\sigma\bigl((-\pi,\pi)\setminus(-\varepsilon_1,\varepsilon_1)\bigr)=u_{\sigma^\#}(0).
\end{align}
Inequality \((\ref{u sigma mhden})\) is strict because \(\varepsilon_1>0\), and this concludes our proof.
\end{proof}
\begin{remark}
\textnormal{By Remark \(\ref{remark thetikes luseis}\), \(\displaystyle\min_{[-\pi,\pi]} u_\mu=u_\mu(\pm\pi)=0\), so the temperature gap or oscillation of \(u_\mu\) is
\begin{equation}
\operatorname{osc}(u_\mu)=\max_{[-\pi,\pi]}u_\mu-\min_{[-\pi,\pi]}u_\mu=\max_{[-\pi,\pi]}u_\mu.
\end{equation}
Thus, Theorem \textnormal{\ref{theorem max}} automatically also solves a maximal problem about the oscillation of solutions to the Dirichlet problem.
The oscillation of solutions to the Robin problem was studied by D. Betsakos and A. Solynin in \textnormal{\cite{Betsakos}}, and, as already mentioned in the introduction, the Robin problem is closely linked to the Dirichlet problem.}
\end{remark}
\section{Karamata type inequalities}
The following lemma and its proof are based on Lemma 1 and Theorem 1 of \cite{Karamata}.
\begin{lemma}[Karamata's inequality]\label{karamata}
Let \(\phi:[0,M]\to\mathbf{\mathbb{R}}\) be a convex and increasing function, where \(M>0\). Let \(x_1,x_2\) and \(y_1,y_2\) be two pairs of real numbers contained in \([0,M]\) such that
\begin{enumerate}
    \item[(a)] \(x_1\geq x_2\) and \(y_1\geq y_2\),
    \item[(b)] \(x_1\leq y_1\),
    \item[(c)] \(x_1+x_2\leq y_1+y_2\).
\end{enumerate}
Under these assumptions, the following inequality holds
\begin{equation}\label{karamata ineq}
\phi(x_1)+\phi(x_2)\leq\phi(y_1)+\phi(y_2).
\end{equation}
\end{lemma}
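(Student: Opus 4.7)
The plan is to reduce to two cases according to the sign of $x_2-y_2$ and then exploit the standard fact that chords of a convex function have slopes that are monotone in both endpoints.

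First, I would dispose of the easy case $x_2\leq y_2$ immediately: combined with $x_1\leq y_1$, the monotonicity of $\phi$ yields $\phi(x_1)\leq\phi(y_1)$ and $\phi(x_2)\leq\phi(y_2)$, so adding gives (\ref{karamata ineq}) at once. So the content of the lemma is really the case $x_2>y_2$.

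In this case, I would introduce the nonnegative increments $\delta_1:=y_1-x_1$ and $\delta_2:=x_2-y_2$. Hypotheses (b) and (c) translate to $\delta_1\geq 0$ and $\delta_2\leq\delta_1$, while (a) together with $x_2>y_2$ gives the ordering $y_2<x_2\leq x_1\leq y_1$. The inequality (\ref{karamata ineq}) is equivalent to
\[
\phi(y_2+\delta_2)-\phi(y_2)\;\leq\;\phi(x_1+\delta_1)-\phi(x_1),
\]
and the key observation is that for a convex function the chord slope
\[
s(a,b):=\frac{\phi(b)-\phi(a)}{b-a},\qquad a<b,
\]
is nondecreasing in each argument separately. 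Assuming $\delta_1>0$ (else $\delta_2=0$ and both sides vanish), I would apply this twice: from $y_2\leq x_1$ one gets $s(y_2,y_2+\delta_2)\leq s(x_1,x_1+\delta_2)$, and from $\delta_2\leq \delta_1$ one gets $s(x_1,x_1+\delta_2)\leq s(x_1,x_1+\delta_1)$. Writing the left side of the displayed inequality as $\delta_2\cdot s(y_2,y_2+\delta_2)$ and the right side as $\delta_1\cdot s(x_1,x_1+\delta_1)$, the two slope comparisons together with $\delta_2\leq\delta_1$ and the nonnegativity of the slope $s(x_1,x_1+\delta_1)$ (which follows from $\phi$ being increasing) finish the argument.

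The step where both hypotheses are genuinely needed — and what I would flag as the main obstacle — is the passage $\delta_2\,s(x_1,x_1+\delta_2)\leq \delta_1\,s(x_1,x_1+\delta_1)$: classical Karamata uses only convexity because the majorization there is an equality $x_1+x_2=y_1+y_2$, but here we must absorb the possible slack $\delta_1-\delta_2\geq 0$, and this is exactly where monotonicity of $\phi$ (equivalently, nonnegativity of the chord slope) enters. Once this is isolated, the argument is short, so the bulk of the writing would be keeping the two cases and the slope-monotonicity chain clean.
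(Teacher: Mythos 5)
Your proof is correct and uses the same core idea as the paper's: compare the two chord slopes $s(y_2,x_2)$ (nonnegative by monotonicity) and $s(x_1,y_1)$ (the larger by convexity), then absorb the slack $\delta_1-\delta_2\geq 0$ via nonnegativity of the slope. Your initial case split on the sign of $x_2-y_2$ is in fact a touch cleaner than the paper's, which disposes only of $x_1=y_1$ before asserting the reduction to $y_2<x_2\leq x_1<y_1$ and leaves the case $x_1<y_1$, $x_2\leq y_2$ implicit; otherwise the two arguments are essentially identical.
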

\begin{proof}
If \(x_1=y_1\), then \(x_2\leq y_2\) and thus \((\ref{karamata ineq})\) holds trivially, by the monotonicity of \(\phi.\) Therefore, we may assume that \(y_2<x_2\leq x_1<y_2\). Because \(\phi\) is increasing, we have
\[c_2\coloneqq\frac{\phi(y_2)-\phi(x_2)}{y_2-x_2}\geq0,\]
and because \(\phi\) is convex, we have
\[c_1\coloneqq\frac{\phi(y_1)-\phi(x_1)}{y_1-x_1}\geq\frac{\phi(y_2)-\phi(x_2)}{y_2-x_2}=c_2.\]
Thus,
\begin{align*}
\sum_{i=1}^2\phi(y_i)-\sum_{i=1}^2\phi(x_i)&=\phi(y_1)-\phi(x_1)+\phi(y_2)-\phi(x_2)\\
&=c_1(y_1-x_1)+c_2(y_2-x_2)\\
&=c_1(y_1-x_1)+c_2(y_2+y_1-y_1-x_2-x_1+x_1)\\
&=c_1(y_1-x_1)-c_2(y_1-x_1)+c_2(y_2+y_1-x_2-x_1)\\
&=(c_1-c_2)(y_1-x_1)+c_2(y_1+y_2-x_1-x_2)\geq0.
\end{align*}
\end{proof}
\begin{lemma}\label{karamata green}
Let \(G\) be the Green's function for the Dirichlet problem on \([-\pi,\pi]\), let \(b\in(-\pi,0)\cup(0,\pi)\). If \(b>0\) (respectively \(b<0\)), let \(I\) be the interval \([b,\pi]\) (respectively \([-\pi,b]\)). For all \(x,y\in I\), the following two inequalities hold
\begin{equation}\label{4.2}
G(x,y)+G(x',y)\leq G(x,y')+G(x',y')
\end{equation}
and
\begin{equation}\label{4.3}
G(x,y)+G(x,y')\leq G(x',y)+G(x',y'),
\end{equation}
where \(x'\) and \(y'\) are the reflections of \(x\) and \(y\) with respect to \(b\), i.e. \(x'=2b-x\) and \(y'=2b-y\).\\
\end{lemma}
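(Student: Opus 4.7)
The plan is a direct computation built on the explicit formula for \(G\) together with the key identity \(x+x' = 2b = y+y'\). I treat the case \(b>0\) first (so \(I=[b,\pi]\) and \(x', y' \in [2b-\pi,b]\)); the case \(b<0\) is entirely symmetric, and (4.3) will be deduced from (4.2) by the symmetry \(G(x,y)=G(y,x)\).

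The first step is to isolate the bilinear part of \(G\). Writing
\(G(x,y) = \pi/2 - xy/(2\pi) - |x-y|/2\) and using \(x+x'=2b\), the four constant terms \(\pi/2\) cancel between the two sides, while the bilinear contribution to (LHS of (4.2))\,\(-\)\,(RHS of (4.2)) is
\[
-\frac{(x+x')y}{2\pi}+\frac{(x+x')y'}{2\pi}= -\frac{b(y-y')}{\pi} = \frac{2b(b-y)}{\pi}.
\]
Thus only the absolute-value terms remain to be controlled.

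The second and crucial step is to show that these cancel in pairs:
\[
|x-y|+|x'-y| \;=\; |x-y'|+|x'-y'|.
\]
This is where a tiny case analysis is needed, according to whether \(x\le y\) or \(x>y\). In either case the ordering of the quadruple \(\{y',x',b,x,y\}\) is immediate (for \(b>0\): both \(x', y'\) lie to the left of \(b\) and both \(x, y\) to the right), and a direct calculation shows that both sums above equal \(2y-2b\) when \(x\le y\) and \(2x-2b\) when \(x>y\). Either way they agree, and so the absolute-value parts contribute nothing to the difference of the two sides of (4.2).

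Combining the two steps yields (LHS)\,\(-\)\,(RHS) \(=2b(b-y)/\pi\), which is \(\le 0\) since \(b>0\) and \(y\ge b\). The case \(b<0\) runs verbatim: the identity \(x+x'=2b\) is unchanged, the same case analysis on the signs of \(x-y\) etc.\ produces the same pairwise cancellation of absolute values, and \(2b(b-y)\le 0\) because now \(b<0\) and \(y\le b\). Finally, inequality (4.3) follows from (4.2) by symmetry: using \(G(x,y)=G(y,x)\), the inequality (4.3) becomes (4.2) with \(x\) and \(y\) interchanged, and since \(y\in I\) as well, (4.2) already covers this instance. The main point requiring care is the bookkeeping of Step 2 — one must verify that the pairwise cancellation of absolute values holds in both sub-cases \(x\le y\) and \(x>y\) — but once the orderings relative to \(b\) are written down, the identity is a one-line check, and the rest of the proof is algebraic.
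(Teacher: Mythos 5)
Your proof is correct and follows essentially the same route as the paper: expand the explicit formula for \(G\), observe that the absolute-value terms cancel (the paper gets this in one line from \(|x'-y'|=|x-y|\) and \(|x-y'|=|x'-y|\), where your case analysis lands in the same place), and reduce the bilinear part to \(b(y-b)\geq 0\) on \(I\). Your derivation of \((\ref{4.3})\) from \((\ref{4.2})\) via the symmetry \(G(x,y)=G(y,x)\) is a slightly tidier way of saying what the paper means by ``in the same way.''
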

\begin{proof}
Putting down the formulas for \(G(x,y)\), \(G(x',y)\), \(G(x,y')\) and \(G(x',y')\), we see that the inequality \((\ref{4.2})\) is equivalent to 
\begin{equation}\label{4.4}
\frac{xy}{\pi}+|x-y|+\frac{x'y}{\pi}+|x'-y|\geq\frac{xy'}{\pi}+|x-y'|+\frac{x'y'}{\pi}+|x'-y'|.
\end{equation}
But, \(x'=2b-x\) and \(y'=2b-y\), so \(|x'-y'|=|x-y|\) and \(|x-y'|=|x'-y|\). Thus \((\ref{4.4})\) is equivalent to 
\begin{equation}\label{4.5}
    by\geq b^2,
\end{equation}
which, for \(b>0\), holds for all \(y\geq b\), i.e. for all \(y\in I=[b,\pi]\), and similarly, for  \(b<0\), holds for all \(y\in I=[-\pi,b]\).\\
Inequality \((\ref{4.3})\) can be proved in the same way.
\end{proof}
\begin{lemma}\label{lemma karamata u f}
Let \(0\leq f\in L^1[-\pi,\pi]\). Let \(b\in(-\pi,0)\cup(0,\pi),\) let \(I\) be as defined in Lemma \textnormal{\ref{karamata green}} and let \(f_H\) be the polarization of \(f\) with respect to \(b\). The following hold:
\begin{enumerate}
    \item[(a)] If \(x\in I\), let \(x'=2b-x\). Then
    \begin{equation}\label{uf syn uf}
    u_f(x)+u_f(x')\leq u_{f_H}(x)+u_{f_H}(x').
    \end{equation}
    \item[(b)] If \(x\in I\), let \(x'=2b-x\). Then
    \begin{equation}\label{uf uf polwsh polwsh}
    u_f(x)\leq u_{f_H}(x').
    \end{equation}
    \item[(c)] If \(b>0\) (respectively \(b<0\)), let \(x\in[-\pi,b]\) (respectively \(x\in [b,\pi]\)). Then 
    \begin{equation}\label{uf uf polwsh}
    u_f(x)\leq u_{f_H}(x).
    \end{equation}
    For \(b>0\) (respectively \(b<0\)), the following are equivalent:
    \begin{enumerate}
        \item[(i)] \((\ref{uf uf polwsh})\) holds as an equality for some \(x\in(-\pi,b]\) (resp. \(x\in [b,\pi)\)).
        \item[(ii)] \((\ref{uf uf polwsh})\) holds as an equality for all \(x\in(-\pi,b]\) (resp. \(x\in [b,\pi)\)).
        \item[(iii)] \(u_f\equiv u_{f_H}\) on \([-\pi,\pi]\).
        \item[(iv)] \(f=f_H\) almost everywhere on \([-\pi,\pi]\).
    \end{enumerate}
\end{enumerate}
\end{lemma}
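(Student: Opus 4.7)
The plan is to convert everything to the Green's representation $u_f(x)=\int_{-\pi}^{\pi} G(x,y)f(y)\,dy$ and to pair each $y\in[b,\pi]$ with its reflection $y'=2b-y\in[2b-\pi,b]$. I would carry out the argument for $b\in(0,\pi)$; the case $b\in(-\pi,0)$ is completely analogous after reflecting. On the unpaired region $[-\pi,2b-\pi)$ we have $f_H\equiv f$ by definition, so any integrand carrying the factor $f_H-f$ receives no contribution there.

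For part~(a), writing $u_{f_H}(x)+u_{f_H}(x')-u_f(x)-u_f(x')=\int K(y)\bigl[f_H(y)-f(y)\bigr]\,dy$ with $K(z)=G(x,z)+G(x',z)$, and using the identity $f_H(y)+f_H(y')=f(y)+f(y')$, the paired contribution at $y\in[b,\pi]$ collapses to $\bigl[K(y')-K(y)\bigr]\bigl[f(y)-f_H(y)\bigr]$. The first factor is non-negative by inequality~\eqref{4.2} of Lemma~\ref{karamata green}, and the second because $f_H(y)=\min\{f(y),f(y')\}\leq f(y)$ on $[b,\pi]$. For part~(c), the same pairing reduces the pair contribution to zero when $f(y)\leq f(y')$ and to $\bigl[f(y)-f(y')\bigr]\bigl[G(x,y')-G(x,y)\bigr]$ when $f(y)>f(y')$. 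Here $G(x,y')-G(x,y)\geq 0$ for $x\in[-\pi,b]$ and $y\in[b,\pi]$ is a direct consequence of the piecewise formula of~$G$, splitting on whether $x+y\leq 2b$ or $x+y>2b$; in both sub-cases one obtains a sum of manifestly non-negative terms.

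Part~(b) is the main obstacle, because the pointwise comparison $G(x,y)\leq G(x',y)$ is false in general, which rules out a Hardy--Littlewood type reduction. I would again pair; the $[-\pi,2b-\pi)$ region contributes $\int[G(x',y)-G(x,y)]f(y)\,dy$, non-negative by the direct identity $G(x',y)-G(x,y)=(x-b)(y+\pi)/\pi$. For the paired contribution, write it as $Pf(y)+Qf(y')$ and use
\[
Pf(y)+Qf(y')=\tfrac{P+Q}{2}\bigl(f(y)+f(y')\bigr)+\tfrac{P-Q}{2}\bigl(f(y)-f(y')\bigr).
\]
In both sub-cases $(f(y)\leq f(y')$ and $f(y)>f(y'))$ the sum $P+Q$ equals $\bigl(G(x',y)+G(x',y')\bigr)-\bigl(G(x,y)+G(x,y')\bigr)\geq 0$ by~\eqref{4.3}. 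The difference $P-Q$ has the required sign by~\eqref{4.2} when $f(y)>f(y')$; when $f(y)\leq f(y')$ it amounts to the auxiliary Karamata-type inequality
\[
G(x,y)+G(x',y')\geq G(x,y')+G(x',y)\qquad\text{for }x,y\in[b,\pi],
\]
which I would prove by the same substitution used in Lemma~\ref{karamata green}: applying $|x-y|=|x'-y'|$ and $|x-y'|=|x'-y|$, it reduces to $\pi\min(x-b,y-b)\geq (x-b)(y-b)$ on $[b,\pi]^2$, valid because $\max(x-b,y-b)\leq\pi-b<\pi$.

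Finally, the chain (iv)$\Rightarrow$(iii)$\Rightarrow$(ii)$\Rightarrow$(i) of~(c) is immediate. For (i)$\Rightarrow$(iv), equality at some $x_0\in(-\pi,b]$ forces
\[
\int_b^\pi \mathbf{1}_{\{f(y)>f(2b-y)\}}\bigl[f(y)-f(2b-y)\bigr]\bigl[G(x_0,2b-y)-G(x_0,y)\bigr]\,dy=0.
\]
The case computation used in (c) shows that the factor $G(x_0,2b-y)-G(x_0,y)$ is strictly positive for every $y\in(b,\pi]$ as soon as $x_0>-\pi$ (the surviving terms are either $(y-b)(x_0+\pi)/\pi$ or contain a positive $b(y-b)/\pi$ piece). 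Hence $\{y\in(b,\pi]:f(y)>f(2b-y)\}$ has Lebesgue measure zero, giving $f(y)\leq f(2b-y)$ a.e.\ on $(b,\pi]$. Under the reflection $y\leftrightarrow 2b-y$ this becomes $f(z)\geq f(2b-z)$ a.e.\ on $[2b-\pi,b)$, and combined with $f\equiv f_H$ on $[-\pi,2b-\pi)$ we conclude $f=f_H$ a.e.\ on $[-\pi,\pi]$.
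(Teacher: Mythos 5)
Your proof is correct, and it is worth recording where it departs from the paper's. For parts (a) and (c) the paper does not compute the paired contributions by hand: it observes that \(G(x,\cdot)+G(x',\cdot)\) (resp.\ \(G(x,\cdot)\), via Example \ref{paradeigma}) coincides with its own polarization and then invokes the Hardy--Littlewood polarization inequality, Proposition \ref{hardy polar}; your pointwise pairing re-derives exactly the special case of that proposition that is needed, which is more self-contained but means you must separately verify \(G(x,y')\geq G(x,y)\) for \(x\in[-\pi,b]\), \(y\in[b,\pi]\), as you do. For part (b) both arguments pair \(y\) with \(y'=2b-y\) and split into the cases \(f(y)\leq f(y')\) and \(f(y)>f(y')\), but your symmetric/antisymmetric decomposition \(\tfrac{P+Q}{2}\bigl(f(y)+f(y')\bigr)+\tfrac{P-Q}{2}\bigl(f(y)-f(y')\bigr)\) requires, in the case \(f(y)\leq f(y')\), a fourth Karamata-type inequality \(G(x,y)+G(x',y')\geq G(x,y')+G(x',y)\) on \(I\times I\) that is not part of Lemma \ref{karamata green}; your reduction of it to \(\pi\min(x-b,y-b)\geq(x-b)(y-b)\) is valid, so there is no gap, but this auxiliary inequality would need to be stated and proved as an addendum to Lemma \ref{karamata green}. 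Finally, for (i)\(\Rightarrow\)(iv) the paper uses the equality case of Proposition \ref{hardy polar} (showing \(C_H=\varnothing\) and \(B_H=\{y:f(y)<f(2b-y)\}\)), whereas you get the same conclusion from the strict positivity of \(G(x_0,2b-y)-G(x_0,y)\) for \(y\in(b,\pi]\) in your explicit formulas; the two are equivalent, and yours avoids appealing to the equality statement of the Hardy--Littlewood proposition at the cost of a little extra computation.
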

\begin{proof}
Due to symmetry, it suffices to prove the lemma for \(b\in(-\pi,0)\).\\
(a) Let \(x\in[-\pi,b]\) and \(x'=2b-x\in[b,2b+\pi].\) From \((\ref{4.2})\) we have that \(G(x,\cdot)+G(x',\cdot)\) coincides with its polarization with respect to \(b\). Thus, by Proposition \ref{hardy polar},
    \begin{align*}
    u_f(x)+u_f(x')&=\int_{-\pi}^\pi\bigl(G(x,y)+G(x',y)\bigr)f(y)dy\\
    &\leq\int_{-\pi}^\pi\bigl(G(x,y)+G(x',y)\bigr)_H f_H(y)dy\\
    &=\int_{-\pi}^\pi\bigl(G(x,y)+G(x',y)\bigr)f_H(y)dy=
    u_{f_H}(x)+u_{f_H}(x').
    \end{align*}
 (b) Let \(x\in[-\pi,b]\) and consider \(x'=2b-x\in[b,2b+\pi].\) By the definition of polarization, \(f\equiv f_H\) a.e. on \((2b+\pi,\pi]\). 
It is easy to see that \(G(x,y)\leq G(x',y)\) for all \(y\in[2b+\pi,\pi]\), and thus\\
    \[\int_{2b+\pi}^\pi G(x,y)f(y)dy\leq\int_{2b+\pi}^\pi G(x',y)f(y)dy=\int_{2b+\pi}^\pi G(x',y)f_H(y)dy.\]
By the definition of polarization, we have \(f_H(y)=\min\{f(y),f(y')\}\) for all \(y\in[-\pi,b],\) where \(y'=2b-y\). Also, \(f_H(y')=\max\{f(y),f(y')\}\), since \(y'\in[b,2b-y]\), and thus \(f_H(y)\leq f_H(y')\) for all \(y\in[-\pi,b].\)\\
Note that \(\{f_H(y),f_H(y')\}=\{f(y),f(y')\}\), thus we can distinguish two possible cases:\\
either 
    \[f_H(y)=f(y)\text{ and } f_H(y')=f(y')\]
or
\[f_H(y)=f(y')\text{ and } f_H(y')=f(y).\]
Assume that the former case is true, then \(f(y)=f_H(y)\leq f_H(y')=f(y')\).
By Lemma \ref{karamata green}, we have \(G(x,y)-G(x',y)\leq G(x',y')-G(x,y'),\) and, thus
\[G(x,y)f(y)-G(x',y)f_H(y)\leq G(x',y')f_H(y')-G(x,y')f(y').\]
Similarly, if the latter case holds, then \(f(y')=f_H(y)\leq f_H(y')=f(y)\). Inequality (\ref{4.3}) implies that \(G(x,y')-G(x',y)\leq G(x',y')-G(x,y)\). Thus,
\[G(x,y')f(y')-G(x',y)f_H(y)\leq G(x',y')f_H(y')-G(x,y)f(y).\]
We conclude that in both cases, for \(y\in[-\pi,b]\) we have
\[G(x,y)f(y)+G(x,y')f(y')\leq G(x',y)f_H(y)+G(x',y')f_H(y'),\]
and, integrating over \([-\pi,b]\) with respect to \(y\), we get
\[\int_{-\pi}^b\bigl(G(x,y)f(y)+G(x,y')f(y')\bigr)dy\leq\!\!\int_{-\pi}^b\bigl(G(x',y)f_H(y)+G(x',y')f_H(y')\bigr)dy,\]
and with a change of variables this implies that
\[\int_{-\pi}^{2b+\pi} G(x,y)f(y)dy\leq\int_{-\pi}^{2b+\pi} G(x',y)f_H(y)dy.\]
We have already shown that
\[\int_{2b+\pi}^\pi G(x,y)f(y)dy\leq\int_{2b+\pi}^\pi G(x',y)f_H(y)dy,\]
and, adding these two inequalities, we end up with \((\ref{uf uf polwsh polwsh})\).\\
(c) Let \(x\in[b,\pi]\). By Example \ref{paradeigma}, \(G(x,\cdot)\equiv \bigl(G(x,\cdot)\bigr)_H\). Thus, by Proposition \ref{hardy polar}, we have
\begin{align*}
u_f(x)&=\int_{-\pi}^\pi G(x,y)f(y)dy\leq\int_{-\pi}^\pi \bigl(G(x,y)\bigr)_H f_H(y)dy\\
&=\int_{-\pi}^\pi G(x,y)f_H(y)dy=u_{f_H}(x).
\end{align*}
Considering the equivalence statement on Lemma \ref{lemma karamata u f}(c), it is trivial to see that \((iv)\implies(iii)\implies(ii)\implies(i)\), and to close the circle, we just have to show that \((i)\implies(iv)\).\\
Assume that \((\ref{uf uf polwsh})\) holds as an equality for some \(x\in[b,\pi).\)
By Proposition \ref{hardy polar}, we have that \(\lambda(B_H\cup C_H)=0\), where
\[B_H=\{y\in(b,\pi]:f(y)<f(2b-y)\text{ and }G(x,y)>G(x,2b-y)\}\]
and
\[C_H=\{y\in(b,\pi]:f(y)>f(2b-y)\text{ and }G(x,y)<G(x,2b-y)\}.\]
By Example \(\ref{paradeigma}\), \(G(x,y)> G(x,2b-y)\) for all \(y\in(b,2b+\pi)\). Note that we have considered  \(G\) to  be identically zero outside of \([-\pi,\pi]^2\). Thus, \(C_H=\varnothing\) and \(B_H=\{y\in(b,2b+\pi):f(y)<f(2b-y)\}\). So, \(f=f_H\) a.e. on \((b,2b+\pi)\) and, by the definition of polarization, \(f=f_H\) a.e. on \([-\pi,\pi]\).
\end{proof}
\begin{lemma}\label{karamata u d}
Let \(\delta\) be a purely discontinuous measure on \((-\pi,\pi)\), as defined in Definition \(\ref{decompdef}\). Let \(b\in(-\pi,0)\cup(0,\pi)\), let \(\mu_H\) be the polarization of \(\mu\) with respect to \(b\), as given in Definition \(\ref{defpolarfun}\). Let \(I\) be as defined in Lemma \textnormal{\ref{karamata green}}. The following hold:
\begin{enumerate}
    \item[(a)] If \(x\in I\), let \(x'=2b-x\). Then
    \begin{equation}\label{u d syn u d}
    u_\delta(x)+u_\delta(x')\leq u_{\delta_H}(x)+u_{\delta_H}(x').
    \end{equation}
    \item[(b)] If \(x\in I\), let \(x'=2b-x\). Then
    \begin{equation}\label{u d u d polwsh polwsh}
    u_\delta(x)\leq u_{\delta_H}(x').
    \end{equation}
    \item[(c)] If \(b>0\) (respectively \(b<0\)), let \(x\in[-\pi,b]\) (respectively \(x\in [b,\pi]\)). Then 
    \begin{equation}\label{u d u d polwsh}
    u_\delta(x)\leq u_{\delta_H}(x).
    \end{equation}
    For \(b>0\) (respectively \(b<0\)), the following are equivalent
    \begin{enumerate}
        \item[(i)] \((\ref{u d u d polwsh})\) holds as an equality for some \(x\in(-\pi,b]\) (resp. \(x\in [b,\pi)\)).
        \item[(ii)] \((\ref{u d u d polwsh})\) holds as an equality for all \(x\in(-\pi,b]\) (resp. \(x\in [b,\pi)\)).
        \item[(iii)] \(u_\delta\equiv u_{\delta_H}\) on \([-\pi,\pi]\).
        \item[(iv)] \(\delta=\delta_H\).
    \end{enumerate}
\end{enumerate}
\end{lemma}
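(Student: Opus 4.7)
The plan is to transcribe the proof of Lemma~\ref{lemma karamata u f} from the $L^1$ setting to sums of point masses: integrals against the Green's function become sums $\sum_i a_i G(x, x_i)$, and Proposition~\ref{hardy polar} is replaced by a direct pair-by-pair rearrangement. By reflection symmetry I reduce to $b\in(-\pi,0)$ and write $\delta=\sum_i a_i\delta_{x_i}$. The atoms fall into two groups: reflection pairs $\{y,y'\}$ with $y\in[-\pi,b]$ and $y':=2b-y\in[b,2b+\pi]$; and unpaired atoms $x_i\in(2b+\pi,\pi)$, which the polarization leaves untouched. For each pair, $\delta_H$ puts $\min\{a_y,a_{y'}\}$ at $y$ and $\max\{a_y,a_{y'}\}$ at $y'$.

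For part (a) I apply (\ref{4.2}) of Lemma~\ref{karamata green} to each pair: setting $S_1=G(x,y)+G(x',y)$ and $S_2=G(x,y')+G(x',y')$, the inequality $S_1\leq S_2$ makes the pair's contribution to $[u_{\delta_H}(x)+u_{\delta_H}(x')]-[u_\delta(x)+u_\delta(x')]$ equal to $0$ when $a_y\leq a_{y'}$ and to $(a_y-a_{y'})(S_2-S_1)\geq 0$ otherwise. Unpaired atoms contribute identically to both sides, so summing gives (a). For part (b) I follow the two-case analysis of Lemma~\ref{lemma karamata u f}(b): for each pair, the subcase $a_y\leq a_{y'}$ uses (\ref{4.3}) and the subcase $a_y>a_{y'}$ uses (\ref{4.2}) of Lemma~\ref{karamata green}, producing in each case
\[a_y G(x,y)+a_{y'} G(x,y')\;\leq\;\delta_H(\{y\})\,G(x',y)+\delta_H(\{y'\})\,G(x',y').\]
For unpaired atoms $x_i$, the direct estimate $G(x,x_i)\leq G(x',x_i)$ holds because $z\mapsto G(z,x_i)$ is increasing on $(-\pi,x_i)$ by the formula~(\ref{Green}) and $x\leq x'\leq 2b+\pi<x_i$.

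For part (c), Example~\ref{paradeigma} implies that for every $x\in[b,\pi]$ the function $G(x,\cdot)$ coincides with its own polarization with respect to $b$, so $G(x,y)\leq G(x,2b-y)$ for all $y\in[-\pi,b]$. The two-term rearrangement inequality then gives, for each pair,
\[a_y G(x,y)+a_{y'} G(x,y')\;\leq\;\min\{a_y,a_{y'}\}\,G(x,y)+\max\{a_y,a_{y'}\}\,G(x,y'),\]
which is precisely its contribution to $u_{\delta_H}(x)$; summing over pairs and the (equal) unpaired contributions yields~(\ref{u d u d polwsh}). For the equivalence, (iv)$\Rightarrow$(iii)$\Rightarrow$(ii)$\Rightarrow$(i) is immediate. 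For (i)$\Rightarrow$(iv), a short computation from~(\ref{Green}) shows the \emph{strict} inequality $G(x,y)<G(x,2b-y)$ for every $x\in[b,\pi)$ and $y\in[-\pi,b)$; hence if equality holds in (\ref{u d u d polwsh}) at some such $x$, any pair with $a_y>a_{y'}$ would contribute strictly more to $u_{\delta_H}(x)$ than to $u_\delta(x)$, forcing $a_y\leq a_{y'}$ for every pair, i.e., $\delta=\delta_H$.

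The main obstacle will be the case analysis in part (b): one must carefully match each of the two orderings $a_y\lessgtr a_{y'}$ with the correct inequality from Lemma~\ref{karamata green} and track the signs of $G(x,y)-G(x',y)$ and $G(x,y')-G(x',y')$, particularly when the paired masses differ substantially or when $y$ approaches the fixed point $b$ of the reflection (where the pair is trivial). The remainder is a faithful translation of the $L^1$ proof to sums of atoms.
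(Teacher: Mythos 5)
Your proposal is correct and follows essentially the same route as the paper: the paper's reduction to the three ``simple cases'' (A), (B), (C) is the same bookkeeping as your reflection pairs (with possibly zero mass on one side) plus untouched atoms, and parts (a), (c) and the equivalence are handled identically. One small correction for part (b): the subcase \(a_y>a_{y'}\) is not closed by \((\ref{4.2})\) alone — as in the paper, both subcases rest on \((\ref{4.3})\), combined with the elementary sign facts \(G(x,y)\leq G(x',y')\) and \(G(x,y')\leq G(x',y')\) (e.g.\ write \(a_y=a_{y'}+c\) and apply \((\ref{4.3})\) to the \(a_{y'}\)-part and \(G(x,y)\leq G(x',y')\) to the excess \(c\)), which is exactly the sign-tracking you flagged as the delicate point.
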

\begin{proof}
Let us denote by \(K\) the set \(\operatorname{supp}\delta=\{x_i\}\). We shall prove the lemma for \(b\in(-\pi,0).\) The proof for the other case is completely analogous.\\
(a) Let \(x\in[-\pi,b]\) and \(x'=2b-x\in[b,2b+\pi].\) We want to show that
\begin{equation}\label{karamata u delta}
u_\delta(x)+u_\delta(x')\leq u_{\delta_H}(x)+u_{\delta_H}(x') \text{ for all }x\in[-\pi,b],
\end{equation}
i.e.
\[\sum_i a_i G(x,x_i)+\sum_i a_i G(x',x_i)\leq\sum_i a_iG(x,\tilde{x}_i)+\sum_i a_iG(x',\tilde{x}_i),\]
where either \(\tilde{x}_i=x_i\) or \(\tilde{x}_i=2b-x_i=x'_i.\) If \(\tilde{x}_i=x_i\), then obviously \(a_iG(x,x_i)+a_iG(x',x_i)=a_iG(x,\tilde{x}_i)+a_iG(x',\tilde{x}_i)\), and if \(\tilde{x}_i=x'_i\), then, by Lemma \(\ref{karamata green}\), \(a_iG(x,x_i)+a_iG(x',x_i)\leq a_iG(x,\tilde{x}_i)+a_iG(x',\tilde{x}_i)\). With summation over all indices \(i\), we get the inequality \((\ref{karamata u delta})\).\\
For (b) and (c), note that \(u_{a_1\mu_1+a_2\mu_2}=a_1u_{\mu_1}+a_2u_{\mu_2}\). Consider three simple cases:
\begin{enumerate}
    \item[(A)] The measure \(\delta\) is a single unit mass \(\delta_{x_i}\) such that \(b\leq x_i<\pi\) and \(x_i'\notin K\).
    \item[(B)] The measure \(\delta\) is a single unit mass \(\delta_{x_i}\) such that \(-\pi<x_i<b\) and \(x_i'\notin K\).
    \item[(C)] The measure \(\delta\) is the sum of unit masses at two points which are symmetric with respect to \(b\), i.e. \(\delta_{x_i}+\delta_{x'_i}\), where \(-\pi<x_i<b<x'_i<2b+\pi\) and \(x_i'=2b-x_i\).
\end{enumerate}
We assumed that, if \(\delta_{x_i}\) belongs to case \((A)\), then \(x_i'\notin K\) (and the analogous for case \((B)\)), to avoid the possibility that case \((C)\) can further decompose into the sum of two measures, one from case \((A)\) and one from \((B)\).\\
Note that the polarization with respect to \(b\) does not affect cases \((A)\) and \((C)\) but it does affect the measures of case \((B)\)\\
We can construct our original measure \(\delta\) with linear combinations of cases \((A)\), \((B)\) and \((C)\), where \(\) thus it suffices to prove the rest of Lemma \(\ref{karamata u d}\) just for these three cases.\\
(b) Let \(x\in[-\pi,b]\) and \(x'=2b-x\in[b,2b+\pi].\)\\
In case \((A)\), \((\ref{u d u d polwsh polwsh})\) is equivalent to \(G(x,x_i)\leq G(x,\tilde{x}_i)\) for all \(x\in[-\pi,b]\). Since \(b\leq x_i\), we have \(\tilde{x}_i=x_i\) and, by Example \ref{paradeigma}, \(G(x,x_i)\leq G(x',x_i)\) for all \([-\pi,b].\)\\
In case \((B)\), since \(x_i<b\), we have \(\tilde{x}_i=x_i'=2b-x_i\) and thus \((\ref{u d u d polwsh polwsh})\) is equivalent to \(G(x,x_i)\leq G(x',x_i')\) for all \(x\in[-\pi,b]\). It is trivial to see that this last inequality is equivalent to \(x+x_i\leq 2b\), which holds for all \(x\in[-\pi,b]\) since \(x_i<b\).\\
In case \((C)\), we want to show that \(G(x,x_i)+G(x,x_i')\leq G(x',x_i)+G(x',x_i')\) for all \(x\in I\). Since \(x_i\in I\), we have that this inequality is true, by Lemma \(\ref{karamata green}\).\\
(c) Let \(x\in[b,\pi]\). In cases \((A)\) and \((C)\), we have that \(\delta=\delta_H\), and thus \((\ref{u d u d polwsh})\) holds trivially as an equality.\\
In case \((B)\), inequality \((\ref{u d u d polwsh})\) is equivalent to \(G(x,x_i)\leq G(x,2b-x_i)\) for all \(x\in[b,\pi]\). This last inequality is quite elementary and in fact it is strict for all \(x\in[b,\pi)\), since \(x_i\neq x_i'\).\\
Finally, we deal with the equivalence statement of Lemma \ref{karamata u d}(c). Trivially, we have that \((iv)\implies(iii)\implies(ii)\implies(i)\) and we just have to show that \((i)\implies(iv)\) . To achieve this, we will use contraposition. Assume that \(\delta_H\) is not equal to \(\delta\). Then \(\delta\) is equal to a linear combination of measures of cases \((A)\), \((B)\) and \((C)\) and at least one of the components is a positive scalar product of a measure of case \((B)\). We saw that
\[u_{\delta_{x_i}}(x)<u_{\delta_{x_i'}}(x)\text{ for all }x\in[b,\pi),\]
where \(\delta_{x_i}\) belongs in case \((B)\), thus \((\ref{u d u d polwsh})\) has to be a strict inequality for all \(x\in[b,\pi)\).
\end{proof}
\begin{proposition}\label{Karamataphi}
Let \(\mu\) be a finite Borel measure without a singular continuous part. Let \(b\in(-\pi,0)\cup(0,\pi)\), let \(I\) be as defined in Lemma \(\ref{karamata green}\)., let \(\mu_H\) be the polarization of \(\mu\) with respect to \(b\). Let \(\phi:\mathbb{R}\to\mathbb{R}\) be a convex and increasing function.
\begin{enumerate}
    \item[(a)] If \(x\in I,\) let \(x'=2b-x\). Then 
    \begin{equation}\label{karamata_phi}
        \phi\bigl(u_\mu(x)\bigr)+\phi\bigl(u_\mu(x')\bigr)\leq\phi\bigl(u_{\mu_H}(x)\bigr)+\phi\bigl(u_{\mu_H}(x')\bigr).
    \end{equation}
    \item[(b)] If \(b>0\) (respectively \(b<0\)), let \(x\in[-\pi,b]\) (respectively \(x\in[b,\pi]\)). Then
    \begin{equation}\label{polar_phi}
        \phi\bigl(u_\mu(x)\bigr)\leq\phi\bigl(u_{\mu_H}(x)\bigr).
    \end{equation}
    If \(\phi\) is strictly increasing and \(b>0\) (respectively \(b<0\)), then the following are equivalent
        \begin{enumerate}
        \item[(i)] \((\ref{polar_phi})\) holds as an equality for some \(x\in(-\pi,b]\) (respectively \(x\in[b,\pi)\)).
        \item[(ii)] \((\ref{polar_phi})\) holds as an equality for all \(x\in(-\pi,b]\) (resp. \(x\in[b,\pi)\)).
        \item[(iii)] \(\mu=\mu_H\).
        \end{enumerate}
    \end{enumerate}
\end{proposition}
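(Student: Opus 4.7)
Since $\mu$ has no singular continuous part, Theorem \ref{decomp} and Definition \ref{polar measure} let me write
\[
d\mu=fd\lambda+d\delta,\qquad d\mu_H=f_H\,d\lambda+d\delta_H,
\]
with $0\leq f\in L^1[-\pi,\pi]$ and $\delta=\sum_i a_i\delta_{x_i}$ purely discontinuous, and linearity of the Green representation \eqref{u_mu} gives $u_\mu=u_f+u_\delta$, $u_{\mu_H}=u_{f_H}+u_{\delta_H}$. The plan is to handle the two components separately via Lemmas \ref{lemma karamata u f} and \ref{karamata u d}, combine them, and use Karamata's inequality (Lemma \ref{karamata}) for part (a).

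For part (b), I would simply sum the pointwise bounds $u_f(x)\leq u_{f_H}(x)$ and $u_\delta(x)\leq u_{\delta_H}(x)$ coming from parts (c) of the two lemmas to obtain $u_\mu(x)\leq u_{\mu_H}(x)$, and apply the monotonicity of $\phi$ to get \eqref{polar_phi}. For the equivalence, $(iii)\Rightarrow(ii)\Rightarrow(i)$ is immediate. For $(i)\Rightarrow(iii)$, strict monotonicity of $\phi$ upgrades equality in \eqref{polar_phi} at some $x$ to $u_\mu(x)=u_{\mu_H}(x)$; since both summand inequalities point the same way, equality of the sums forces $u_f(x)=u_{f_H}(x)$ and $u_\delta(x)=u_{\delta_H}(x)$, and the equivalence clauses in Lemmas \ref{lemma karamata u f}(c) and \ref{karamata u d}(c) then deliver $f=f_H$ a.e.\ and $\delta=\delta_H$, so $\mu=\mu_H$.

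For part (a), I would apply Karamata's inequality to $\phi$ restricted to a sufficiently large $[0,M]$ (all four relevant values of $u_\mu$ and $u_{\mu_H}$ are bounded), setting
\[
x_1=\max\{u_\mu(x),u_\mu(x')\},\quad y_1=\max\{u_{\mu_H}(x),u_{\mu_H}(x')\},
\]
with $x_2,y_2$ the corresponding minima. Hypothesis (a) of Karamata is automatic, and hypothesis (c) follows immediately by summing parts (a) of Lemmas \ref{lemma karamata u f} and \ref{karamata u d}. For hypothesis (b), that $x_1\leq y_1$, I would combine parts (b) of the two lemmas to obtain $u_\mu(x)\leq u_{\mu_H}(x')$, and invoke parts (c) of the two lemmas applied at the reflected point $x'$ (which, by the definition of $I$, lies in the complementary half-interval where parts (c) are available) to obtain $u_\mu(x')\leq u_{\mu_H}(x')$. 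These two bounds together give $\max\{u_\mu(x),u_\mu(x')\}\leq u_{\mu_H}(x')\leq y_1$. Karamata then produces \eqref{karamata_phi}. The main obstacle is precisely this verification of hypothesis (b) of Karamata: one must dominate both values of $u_\mu$ simultaneously by the single reflected value $u_{\mu_H}(x')$, exploiting the asymmetric pointwise estimates in parts (b) and (c) of the two preceding lemmas; once this is in place, the rest of the argument is routine bookkeeping.
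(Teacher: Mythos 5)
Your proposal is correct and follows essentially the same route as the paper: decompose $\mu=\nu+\delta$, combine Lemmas \ref{lemma karamata u f} and \ref{karamata u d} to get the bounds $u_{\mu_H}(x')\geq\max\{u_\mu(x),u_\mu(x')\}$ and $u_\mu(x)+u_\mu(x')\leq u_{\mu_H}(x)+u_{\mu_H}(x')$, feed these into Karamata's inequality for (a), and for (b) use monotonicity of $\phi$ together with the equivalence clauses of the two lemmas. Your explicit verification of Karamata's hypothesis (b) via $\max\{u_\mu(x),u_\mu(x')\}\leq u_{\mu_H}(x')$ is exactly the step the paper performs (only slightly more tersely), and the only cosmetic difference is that the paper fixes $M=u_{\mu^\#}(0)$ via Theorem \ref{theorem max} where you take any sufficiently large $M$.
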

\begin{proof}
By Theorem \ref{theorem max}, the ranges of functions \(u_\mu\), \(u_{\mu_H}\) and \(u_{\mu^\#}\) are all subintervals of \([0,u_{\mu^\#}(0)]\). Trivially, the class of functions that are convex and increasing on \([0,u_{\mu^\#}(0)]\) contains the functions that are convex and increasing on the whole real line. The proof works for both cases.\\
(a) Combining Lemma \ref{lemma karamata u f} for the absolutely continuous part of \(\mu\) and Lemma \ref{karamata u d} for the purely discontinuous part of \(\mu\), we get that
\[u_{\mu_H}(x')\geq\max\{u_\mu(x),u_\mu(x')\}\]
and 
\[u_\mu(x)+u_\mu(x')\leq u_{\mu_H}(x)+u_{\mu_H}(x').\]
We apply Lemma \ref{karamata} for \(M=u_{\mu^\#}(0)\),
\[\{x_1,x_2\}=\{u_\mu(x),u_\mu(x')\} \text{ and } \{y_1,y_2\}=\{u_{\mu_H}(x),u_{\mu_H}(x')\}\]
and we get the desired inequality for all functions \(\phi\) that are convex and increasing on \([0,u_{\mu^\#}(0)]\) and thus for all real functions that are convex and increasing on the whole real line.\\
(b) The inequality \((\ref{polar_phi})\) is a direct consequence of Lemma \ref{lemma karamata u f}(c), Lemma \ref{karamata u d}(c) and the monotonicity of \(\phi\). Assuming that \(\phi\) is strictly increasing, we have equality here if and only if equality holds on Lemma \ref{lemma karamata u f}(c) and on Lemma \ref{karamata u d}(c), i.e. if \(f=f_H\) a.e. on \([-\pi,\pi]\) and \(\delta=\delta_H\).
\end{proof}
\section{Proofs of main theorems}\(\phantom{.}\)\\
\textbf{Proof of Theorem 3.8.} Without loss of generality, assume that \(b\in(0,\pi).\)\\
We integrate both sides of inequality \((\ref{karamata_phi})\) from \(b\) to \(\pi\) and after a change of variables, we get
\begin{equation}\label{proof means u f H 1}
    \int_{2b-\pi}^\pi \phi\bigl(u_\mu(x)\bigr)dx\leq\int_{2b-\pi}^\pi \phi\bigl(u_{\mu_H}(x)\bigr)dx.
\end{equation}
Similarly, by \((\ref{polar_phi})\),
\begin{equation}\label{proof means u f H 2}
    \int_{-\pi}^{2b-\pi} \phi\bigl(u_\mu(x)\bigr)dx\leq\int_{-\pi}^{2b-\pi} \phi\bigl(u_{\mu_H}(x)\bigr)dx.
\end{equation}
The inequalities \((\ref{proof means u f H 1})\) and \((\ref{proof means u f H 2})\) combined give the inequality \((\ref{ineq polar})\).
\par Assume that \(\phi\) is strictly increasing. In order for \((\ref{ineq polar})\) to hold as an equality, \((\ref{proof means u f H 2})\) must hold as an equality. If \(\mu=\mu_H\), then obviously \((\ref{proof means u f H 1})\), \((\ref{proof means u f H 2})\) and \((\ref{ineq polar})\) hold as equalities.
\par Conversely, assume that the measures \(\mu\) and \(\mu_H\) are not the same. Then, by Proposition \(\ref{Karamataphi}\)(b),
\[\phi\bigl(u_\mu(x)\bigr)\leq\phi\bigl(u_{\mu_H}(x)\bigr) \text{ for all } x\in[-\pi,b]\supset[-\pi,2b-\pi],\] 
and thus \((\ref{proof means u f H 2})\) holds as a strict inequality, therefore \((\ref{ineq polar})\) holds as a strict inequality as well.\qed
\begin{remark}\label{remark Lp norms polar}
\textnormal{As we saw in the proof of Proposition 4.5 we could have assumed that \(\phi\) is convex and increasing on \([0,M]\), where \(M=u_{\mu^\#}(0)\). Similarly, Theorem 3.8 and its proof work for such functions as well. Consider the convex and strictly increasing function \(\phi(\cdot)=|\cdot|^p:[0,M]\to\mathbb{R}\), where \(p\geq1\). Because \(u_\mu\) and \(u_{\mu_H}\) are non-negative, Theorem 3.8 implies that
\begin{equation}\label{Lp polar}
||u_\mu||_{L^p}\leq ||u_{\mu_H}||_{L^p} \text{ for all }p\in[1,+\infty).
\end{equation}
Letting \(p\to+\infty\) we get the inequality for their \(L^\infty\)-norms:
\[||u_\mu||_\infty\leq||u_{\mu_H}||_\infty.\]}
\end{remark}
The following lemma uses a special case of Theorem 6.1 of \cite{Brock}.
\begin{lemma}\label{seq of polar to sdr}
Let \(0\leq f\in L^1[-\pi,\pi]\). There exists a sequence of non-zero numbers \(\{b_n\}_{n=1}^\infty\subset(-\pi,\pi)\) such that
\[f_n\longrightarrow f^\#\quad\textit{ in }\:\:L^1[-\pi,\pi],\]
where \(f_1\) is the polarization of \(f\) with respect to \(b_1\) and \(f_{n+1}\) is the polarization of \(f_n\) with respect to \(b_{n+1}\), for \(n=1,2,\dots.\)
\par Let \(u_{f_{n}}\) and \(u_{f^\#}\) be the solutions to the corresponding Dirichlet problems. Then, \(u_{f_{n}}\longrightarrow u_{f^\#}\) uniformly on \([-\pi,\pi]\).
\end{lemma}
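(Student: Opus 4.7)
The plan is to split the lemma into its two claims and handle them separately, with the first delegated to the cited result of Brock and the second reduced to a uniform-in-$x$ bound using the Green's function representation.

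For the existence of the sequence $\{b_n\}$, I would simply invoke the special case of \cite[Theorem 6.1]{Brock} indicated in the statement. That theorem produces, for any nonnegative $f \in L^1[-\pi,\pi]$, a sequence of polarizations (with respect to half-lines through the origin, which in our one-dimensional setting corresponds precisely to choosing points $b_n \in (-\pi,0)\cup(0,\pi)$ as in Definition \ref{defpolarfun}) whose iterates converge to $f^\#$ in $L^1[-\pi,\pi]$. Since the polarization operations defined in Subsection 3.2 are exactly the one-dimensional instances of those used by Brock, no additional work is needed beyond remarking that our definitions are compatible.

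For the uniform convergence $u_{f_n} \to u_{f^\#}$, I would appeal to Proposition \ref{Representation}, which gives the explicit representation
\begin{equation*}
u_{f_n}(x) - u_{f^\#}(x) = \int_{-\pi}^\pi G(x,y)\bigl(f_n(y) - f^\#(y)\bigr)\,dy.
\end{equation*}
Since $G$ is continuous on the compact square $[-\pi,\pi]^2$ and attains its global maximum $\pi/2$ at $(0,0)$, we have the pointwise bound $0 \leq G(x,y) \leq \pi/2$. Therefore, for every $x \in [-\pi,\pi]$,
\begin{equation*}
\bigl|u_{f_n}(x) - u_{f^\#}(x)\bigr| \leq \frac{\pi}{2}\,\|f_n - f^\#\|_{L^1[-\pi,\pi]}.
\end{equation*}
The right-hand side is independent of $x$ and tends to zero by the first part of the lemma, so $u_{f_n} \to u_{f^\#}$ uniformly on $[-\pi,\pi]$.

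The main obstacle, if any, is purely bookkeeping: making sure that ``polarization with respect to $b$'' in our Definition \ref{defpolarfun} matches the notion of polarization with respect to a half-line used in \cite{Brock}, so that Brock's theorem may be invoked verbatim. The analytical content of the second claim is essentially immediate once the boundedness of $G$ and the $L^1$-convergence of the densities are in hand.
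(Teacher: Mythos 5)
Your proposal is correct and follows essentially the same route as the paper: the first claim is delegated to Brock's Theorem 6.1, and the second is obtained from the Green's function representation together with the uniform bound $G(x,y)\leq G(0,0)=\pi/2$, giving $\bigl|u_{f_n}(x)-u_{f^\#}(x)\bigr|\leq\frac{\pi}{2}\|f_n-f^\#\|_{L^1}$.
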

\begin{proof}
In our context, the existence of a sequence \(\{f_n\}_{n=1}^\infty\) of iterated polarizations, as described in the statement of Lemma \ref{seq of polar to sdr}, such that \(f_n\longrightarrow f^\#\) in \(L^1[-\pi,\pi]\), is directly guaranteed by Theorem 6.1 of \cite{Brock}.
\par Finally, for all \(x\in[-\pi,\pi]\), we have
\begin{align*}
\bigl|u_{f_{n}}(x)-u_{f^\#}(x)\bigr|&=\left|\int_{-\pi}^\pi G(x,y)\bigl(f_n(y)-f^\#(y)\bigr)dy\right|\\
&\leq \max_{x,y\in[-\pi,\pi]}G(x,y)\int_{-\pi}^\pi |f_n(y)-f^\#(y)|dy\\
&=G(0,0)||f_n-f^\#||_{L^1}\xrightarrow{n\to\infty}0. \qedhere
\end{align*}
\end{proof}
We are going to use what in the literature is known as Baernstein's star function. More about star functions can be found in \cite{Baernstein}, specifically in Chapters 9 and 10. In our paper, we only need the star functions for the Lebesgue measure.
\begin{definition}\textnormal{
Let \(f\in L^1[-\pi,\pi]\). We define \(f^\star:[0,2\pi]\to\mathbb{R}\) by
\[f^\star(t)=\sup\int_E f d\lambda,\]
where the supremum is taken over all Borel measurable subsets \(E\) of \([-\pi,\pi]\) with \(\lambda(E)=t\).}
\end{definition}
The following propositions present all the properties of the \(\star\)-function that are relative to our goals. See Propositions 9.2 and Proposition 10.1 of \cite{Baernstein}, respectively.
\begin{proposition}\label{properties of star}
Let \(f\in L^1[-\pi,\pi]\). Then:
\begin{itemize}
    \item[(a)] For each \(t\in[0,2\pi]\) there exists a Borel set \(E\subset[-\pi,\pi]\) with \(\lambda(E)=t\) such that 
    \[f^\star(t)=\int_E fd\lambda.\]
    \item[(b)] For all \(t\in[0,2\pi]\) \(f^\star(t)=\displaystyle\int_0^tf^\ast(s)ds\), where \(f^*\) is the decreasing rearrangement of \(f\).
\end{itemize}
In particular, \((b)\) implies that
\[f^\star(t)=\int_{-t/2}^{t/2}f^\#(s)ds,\]
where \(f^\#\) is the symmetric decreasing rearrangement of \(f.\)
\end{proposition}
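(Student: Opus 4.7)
My plan is to prove $(a)$ by explicitly constructing a maximizing set from a superlevel set of $f$, and then to derive $(b)$ by splitting both $\int_E f\, d\lambda$ and $\int_0^t f^\ast(s)\, ds$ at the point where $f^\ast$ takes the value $s_0 := f^\ast(t)$. The representation of $f^\star$ in terms of $f^\#$ is then an immediate change of variables.

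For $(a)$, I fix $t \in (0,2\pi)$ (the endpoints $t=0,2\pi$ being trivial with $E=\varnothing$ and $E=[-\pi,\pi]$) and set $s_0 = f^\ast(t)\in\mathbb{R}$, which is finite because $f\in L^1[-\pi,\pi]$. The distribution function $d(s) = \lambda(\{f>s\})$ is non-increasing and right-continuous. From the definition of $f^\ast$ we get $d(s_0)\leq t$ and $\lim_{s\uparrow s_0} d(s)\geq t$, so $\lambda(\{f\geq s_0\})\geq t$. I then pick a Borel set $F\subseteq\{f=s_0\}$ with $\lambda(F) = t - d(s_0)$, available by continuity of Lebesgue measure, and set $E = \{f>s_0\}\cup F$, so that $\lambda(E)=t$. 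Maximality is the bathtub argument: for any Borel $E'$ with $\lambda(E')=t$ one has $\lambda(E\setminus E') = \lambda(E'\setminus E)$, and combining this with $f\leq s_0$ on $E^c$ and $f\geq s_0$ on $E$ yields
\[
\int_{E'} f\, d\lambda - \int_E f\, d\lambda = \int_{E'\setminus E} f\, d\lambda - \int_{E\setminus E'} f\, d\lambda \leq s_0\,\lambda(E'\setminus E) - s_0\,\lambda(E\setminus E') = 0.
\]

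For $(b)$, with the same $E$ and $s_0$, I decompose both sides of the target identity as
\[
\int_E f\, d\lambda = \int_{\{f>s_0\}} f\, d\lambda + s_0\,\lambda(F), \qquad \int_0^t f^\ast(s)\, ds = \int_0^{d(s_0)} f^\ast(s)\, ds + \int_{d(s_0)}^t f^\ast(s)\, ds.
\]
The first pair of integrals agrees by the equimeasurability of $f$ and $f^\ast$ together with the layer cake formula (equivalently, $f^\ast$ is the generalized right-continuous inverse of $d$, which gives the change of variables directly). For the second pair, I check that $f^\ast\equiv s_0$ on $[d(s_0),t]$: every $s$ there satisfies $d(s_0)\leq s$, so $s_0$ is a competitor in $\inf\{c:d(c)\leq s\}$, forcing $f^\ast(s)\leq s_0$; conversely, for $s_1 < s_0$ one has $d(s_1) > t\geq s$, so $f^\ast(s)\geq s_0$. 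Hence the second integral equals $s_0(t - d(s_0)) = s_0\,\lambda(F)$, matching the first decomposition. The final identity $f^\star(t) = \int_{-t/2}^{t/2} f^\#(s)\, ds$ follows from $f^\#(x) = f^\ast(2|x|)$ by substituting $u=2|x|$ on the symmetric interval. The only delicate point in the argument is the plateau case $\lambda(\{f=s_0\})>0$, which is precisely what forces the slice $F\subseteq\{f=s_0\}$ and the plateau interval $[d(s_0),t]$; making the contribution $s_0\,\lambda(F)$ match on both sides is the crux of the computation, while outside this case the identity reduces to a routine layer cake calculation.
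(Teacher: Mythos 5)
The paper does not prove this proposition at all --- it is quoted directly from Baernstein's book (Propositions 9.2 and 10.1 of \cite{Baernstein}), so there is no in-paper argument to compare against. Your self-contained proof is correct. Part (a) is the standard bathtub-principle construction: the set \(E=\{f>s_0\}\cup F\) with \(F\subseteq\{f=s_0\}\), \(\lambda(F)=t-d(s_0)\), does exist because \(\lambda(\{f\geq s_0\})\geq t\) forces \(\lambda(\{f=s_0\})\geq t-d(s_0)\), and the comparison \(\int_{E'}f-\int_E f\leq s_0\bigl(\lambda(E'\setminus E)-\lambda(E\setminus E')\bigr)=0\) is airtight. Part (b) correctly isolates the only delicate point, the plateau \(\lambda(\{f=s_0\})>0\): your verification that \(f^\ast\equiv s_0\) on \([d(s_0),t]\) from the two-sided inverse relation between \(f^\ast\) and \(d\) is right, and the matching of \(\int_{\{f>s_0\}}f\,d\lambda\) with \(\int_0^{d(s_0)}f^\ast\) is the standard equimeasurability/layer-cake identity (one applies layer cake to \(f-s_0\geq 0\) on \(\{f>s_0\}\) and to \(f^\ast-s_0\geq0\) on \([0,d(s_0))\), since \(f\) need not be nonnegative). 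Two trivial caveats: you should note explicitly that \(\lambda(\{f=s_0\})\geq t-d(s_0)\) justifies the existence of \(F\); and at \(t=2\pi\) the value \(s_0=f^\ast(2\pi)=\essinf f\) may be \(-\infty\) for \(f\in L^1\), so that endpoint of (b) should be handled directly as the equimeasurability identity \(\int_{-\pi}^{\pi}f=\int_0^{2\pi}f^\ast\) rather than through the \(s_0\)-splitting. Neither affects the substance.
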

\begin{proposition}\label{prop star means}
Let \(f,g\in L^1[-\pi.\pi]\). The following are equivalent.
\begin{itemize}
    \item[(a)] \(\displaystyle\int_{-\pi}^\pi\phi\bigl(f(x)\bigr)dx\leq\int_{-\pi}^\pi\phi\bigl(g(x)\bigr)dx\) for all convex and increasing functions \(\phi:\mathbb{R}\to\mathbb{R}.\)
    \item[(b)]  For any fixed \(y\in\mathbb{R}\), \(\displaystyle\int_{-\pi}^\pi\bigl(f(x)-y\bigr)^+dx\leq\int_{-\pi}^\pi\bigl(g(x)-y\bigr)^+dx\), where \((\cdot)^+=\max\{\cdot,0\}.\)
    \item[(c)] \(f^\star(t)\leq g^\star(t)\) for all \(t\in[0,2\pi].\)
\end{itemize}
\end{proposition}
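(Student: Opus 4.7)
The plan is to prove (a) $\Rightarrow$ (b), then (b) $\Leftrightarrow$ (c), and finally (b) $\Rightarrow$ (a), thereby closing the triangle. The implication (a) $\Rightarrow$ (b) is immediate: for each fixed $y \in \mathbb{R}$, the hinge $\phi_y(s) = (s - y)^+$ is convex and nondecreasing on $\mathbb{R}$, so applying (a) with $\phi = \phi_y$ yields (b).

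The equivalence (b) $\Leftrightarrow$ (c) will rest on a pair of Legendre-duality formulas between the quantities $\int_{-\pi}^\pi (f - y)^+ dx$ and $f^\star(t)$. Because $f^\ast$ is nonincreasing, the super-level set $\{f^\ast > y\}$ is an interval $[0, \tau(y))$ with $\tau(y) = \lambda(\{f > y\})$, and combining this with the equimeasurability of $f$ and $f^\ast$ produces by direct computation
\begin{equation*}
\int_{-\pi}^\pi (f(x) - y)^+ dx = \sup_{t \in [0, 2\pi]} \bigl[f^\star(t) - y t\bigr],
\end{equation*}
from which (c) $\Rightarrow$ (b) follows by monotonicity of $\sup$. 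For (b) $\Rightarrow$ (c), Proposition \ref{properties of star}(a) gives a set $E \subset [-\pi, \pi]$ with $\lambda(E) = t$ and $f^\star(t) = \int_E f$, and the elementary bound $\int_E f = \int_E (f - y) + yt \leq \int_{-\pi}^\pi (f - y)^+ dx + yt$ holds for every $y$. Invoking (b) on the right and taking the infimum over $y$ — which equals $g^\star(t)$, attained at $y = g^\ast(t)$ by concavity of $g^\star$ — gives $f^\star(t) \leq g^\star(t)$.

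Finally, for (b) $\Rightarrow$ (a), the key tool is the integral representation of a convex increasing function as a positive superposition of hinges. Because $\phi'$ is nondecreasing, it induces a positive Borel measure $d\phi'$ on $\mathbb{R}$, and integration by parts yields, for any $a \in \mathbb{R}$ and all $s \geq a$,
\begin{equation*}
\phi(s) = \phi(a) + \phi'_+(a)(s - a) + \int_{(a, \infty)} (s - y)^+ \, d\phi'(y).
\end{equation*}
Choosing $a$ below $\essinf f$ and $\essinf g$, integrating over $[-\pi, \pi]$, and applying Fubini (justified by nonnegativity of all integrands) reduces $\int \phi(f) dx \leq \int \phi(g) dx$ to the pointwise-in-$y$ hypothesis (b) together with the zeroth-order comparison $\int f dx \leq \int g dx$, itself the $y \to -\infty$ limit of (b) by absolute continuity of the integral on $\{f < y\}$. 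The main technical obstacle I anticipate is keeping the Legendre formula rigorous in edge cases where $f^\ast$ has plateaus or jumps at level $y$ (so that the supremum is only attained in a limit), and handling the limit $a \to -\infty$ when $f$ or $g$ is unbounded below; both issues are routine but require careful bookkeeping with $\tau(y)$ and monotone approximation of $\phi$ by $\phi \wedge N$.
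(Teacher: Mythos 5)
Your proof is correct; note that the paper itself offers no proof of this proposition --- it is quoted from Proposition 10.1 of \cite{Baernstein} --- so what you have supplied is a self-contained version of the standard majorization argument rather than a variant of anything internal to the paper. The two pillars both check out. The Legendre-type identity \(\int_{-\pi}^{\pi}(f(x)-y)^{+}\,dx=\sup_{t\in[0,2\pi]}\bigl[f^{\star}(t)-yt\bigr]\) is correct: by Proposition \ref{properties of star}(b), \(f^{\star}(t)-yt=\int_{0}^{t}(f^{\ast}(s)-y)\,ds\) is concave in \(t\) and maximized at \(t=\lambda(\{f>y\})\), where it equals \(\int_{0}^{2\pi}(f^{\ast}(s)-y)^{+}\,ds\), and equimeasurability converts this to \(\int(f-y)^{+}\,dx\). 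The hinge representation \(\phi(s)=\phi(a)+\phi'_{+}(a)(s-a)+\int_{(a,\infty)}(s-y)^{+}\,d\phi'(y)\) that drives (b)\(\Rightarrow\)(a) is exactly the device the paper deploys later for \(\phi_{2}\) in the proof of Theorem \ref{sdrconvex} (see (\ref{representation phi 2})), so your route is consonant with the paper's toolkit. Two edge cases deserve one explicit line each: the infimum \(\inf_{y}\bigl[\int(g-y)^{+}\,dx+yt\bigr]=g^{\star}(t)\) is attained at \(y=g^{\ast}(t)\) only for \(t\in(0,2\pi)\); at \(t=2\pi\) it is the limit as \(y\to-\infty\), where \(\int(g-y)^{+}\,dx+2\pi y\to\int g\,dx\) because \(|y|\,\lambda(\{g\le y\})\le\int_{\{g\le y\}}|g|\,dx\to0\), and this same limit supplies the zeroth-order comparison \(\int f\,dx\le\int g\,dx\) needed in (b)\(\Rightarrow\)(a). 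With those lines written out, together with the truncations \(f\vee a\), \(g\vee a\) for the case \(\essinf f=-\infty\) or \(\essinf g=-\infty\) that you already flag, the argument is complete.
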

\begin{lemma}\label{lemma star} Let \(\sigma\) be a finite Borel measure on \((-\pi,\pi)\) such that \(\sigma\perp\lambda\). Let \(\sigma^\#\) be the symmetrization of \(\sigma\). Then
\begin{equation}\label{star u sigma}
u^\star_\sigma\leq u^\star_{\sigma^\#} \text{ on }[0,2\pi].
\end{equation}
\end{lemma}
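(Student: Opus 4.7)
My plan is to establish the lemma by bounding $\int_E u_\sigma(x)\,dx$ above by $u^\star_{\sigma^\#}(t)$ for every Borel set $E\subseteq[-\pi,\pi]$ with $\lambda(E)=t$, and then taking the supremum over such $E$ to conclude.

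The first observation is that $\sigma^\#$ takes a very simple form here. Since $\sigma\perp\lambda$, the absolutely continuous part of $\sigma$ is zero, and Definition \ref{sdr} gives $\sigma^\#=M\delta_0$ with $M=\sigma((-\pi,\pi))$. Hence $u_{\sigma^\#}(x)=MG(x,0)$ is symmetric and decreasing in $|x|$, so the supremum defining $u^\star_{\sigma^\#}(t)$ is attained on the centered interval $(-t/2,t/2)$, giving $u^\star_{\sigma^\#}(t)=M\int_{-t/2}^{t/2}G(x,0)\,dx$.

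For the pointwise bound I would apply Fubini to the representation (\ref{u_mu}) and write $\int_E u_\sigma(x)\,dx=\int_{-\pi}^\pi\bigl(\int_E G(x,y)\,dx\bigr)d\sigma(y)$. For each fixed $y$, the inner integral is the pairing of $\chi_E$ with $G(\cdot,y)$, to which Proposition \ref{hardy sdr} applies in the $x$-variable with $(\chi_E)^\# = \chi_{(-t/2,t/2)}$, producing the bound $\int_{-t/2}^{t/2}G^\#(x,y)\,dx$. Because $G(x,y)=G(y,x)$, this is exactly the s.d.r.\ that appears in inequality (\ref{sdr ineq green}), so $G^\#(x,y)\leq G(x,0)$. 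Integrating the resulting bound against $d\sigma(y)$ yields $M\int_{-t/2}^{t/2}G(x,0)\,dx = u^\star_{\sigma^\#}(t)$, completing the pointwise estimate.

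I do not anticipate any serious obstacle. The only point requiring care is the bookkeeping between the two uses of the $\#$-notation: Proposition \ref{hardy sdr} produces a rearrangement in the first argument of $G$, while (\ref{sdr ineq green}) is phrased in the second, but the symmetry $G(x,y)=G(y,x)$ makes these identical. Fubini applies freely since $\sigma$ is finite and $G$ is bounded.
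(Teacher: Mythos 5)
Your proof is correct and follows essentially the same route as the paper: both apply Fubini's Theorem to reduce to bounding $\int_E G(x,y)\,dx$ by $\int_{-t/2}^{t/2}G(0,x)\,dx$. The only cosmetic difference is that the paper obtains this inner estimate by citing Theorem~\ref{theorem max} applied to $d\mu=\chi_E\,d\lambda$, whereas you re-derive the same bound directly from Proposition~\ref{hardy sdr} and inequality~(\ref{sdr ineq green}), which is also how Theorem~\ref{theorem max} itself is proved.
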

\begin{proof}
First, we apply Theorem \(\ref{theorem max}\) for \(\mu\) such that \(d\mu=\chi_Ed\lambda\) and we get the inequality
\begin{equation}\label{lemma green E}
\int_{-\lambda(E)/2}^{\lambda(E)/2} G(0,y)dy\geq \int_E G(x,y)dy \text{ for all }x\in[-\pi,\pi],
\end{equation}
where \(E\) is any measurable subset of \((-\pi,\pi)\).
\par We use this inequality and apply Fubini's Theorem twice to get
\begin{align*}
u_\sigma^\star(t)&=\sup_{\lambda(E)=t}\int_E u_\sigma(s)ds=\sup_{\lambda(E)=t}\int_E\int_{-\pi}^\pi G(s,y)d\sigma(y)ds\\
&=\sup_{\lambda(E)=t}\int_{-\pi}^\pi\int_E G(s,y)dsd\sigma(y)\leq\int_{-\pi}^\pi\int_{-t/2}^{t/2} G(0,s)dsd\sigma(y)\\
&=\int_{-t/2}^{t/2}\int_{-\pi}^\pi G(0,s)d\sigma(y)ds=\int_{-t/2}^{t/2}\sigma\bigl((-\pi,\pi)\bigr)G(0,s)ds.
\end{align*}
Note that \((u_{\sigma^\#})^\#(s)=u_{\sigma^\#}(s)=\sigma\bigl((-\pi,\pi)\bigr)G(0,s)\), thus, by Proposition \ref{properties of star}, we have
\[u_{\sigma^\#}^\star(t)=\int_{-t/2}^{t/2}(u_{\sigma^\#})^\#(s)ds=\int_{-t/2}^{t/2}\sigma\bigl((-\pi,\pi)\bigr)G(0,s)ds.\]
This concludes our proof.
\end{proof}
\begin{theorem}\label{means sdr f}
Let \(0\leq f\in L^1[-\pi,\pi]\). Then
\begin{equation}\label{ineq sdr f}
\int_{-\pi}^\pi \phi\bigl(u_f(x)\bigr)dx\leq\int_{-\pi}^\pi \phi\bigl(u_{f^\#}(x)\bigr)dx
\end{equation}
for any convex and increasing function \(\phi:\mathbb{R}\to\mathbb{R},\) and, thus
\begin{equation}\label{star f}
u_f^\star(t)\leq u^\star_{f^\#}(t) \text{ for all } t\in[0,2\pi].
\end{equation}
Furthermore, if \(\phi\) is strictly increasing, then \((\ref{ineq sdr f})\) holds as an equality if and only if \(f=f^\#\) a.e. on \((-\pi,\pi)\).
\end{theorem}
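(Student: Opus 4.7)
The plan is to reduce this theorem about symmetrization to the polarization theorem (Theorem \ref{polarconvex}) via the approximation provided by Lemma \ref{seq of polar to sdr}.

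First I invoke Lemma \ref{seq of polar to sdr} to fix a sequence $\{b_n\}\subset(-\pi,\pi)\setminus\{0\}$ and the corresponding iterated polarizations $\{f_n\}$ (with $f_1$ the polarization of $f$ with respect to $b_1$, and $f_{n+1}$ the polarization of $f_n$ with respect to $b_{n+1}$), so that $f_n\to f^\#$ in $L^1[-\pi,\pi]$ and $u_{f_n}\to u_{f^\#}$ uniformly on $[-\pi,\pi]$. Since each $f_n$ is the Radon--Nikodym derivative of a finite Borel measure without singular continuous part, and $f_{n+1}$ is the polarization of $f_n$, Theorem \ref{polarconvex} applied successively gives the monotone chain
\[
\int_{-\pi}^\pi \phi\bigl(u_f(x)\bigr)dx\leq \int_{-\pi}^\pi \phi\bigl(u_{f_1}(x)\bigr)dx\leq \int_{-\pi}^\pi \phi\bigl(u_{f_2}(x)\bigr)dx\leq\cdots
\]
for every convex and increasing $\phi:\mathbb{R}\to\mathbb{R}$. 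Because $\phi$ is convex on $\mathbb{R}$, it is continuous; since $u_{f_n}\to u_{f^\#}$ uniformly on the compact interval $[-\pi,\pi]$, the values $u_{f_n}(x)$ remain inside a common bounded interval, on which $\phi$ is uniformly continuous. Hence $\phi(u_{f_n})\to\phi(u_{f^\#})$ uniformly, and passing to the limit in the chain above yields \eqref{ineq sdr f}. The $\star$-function inequality \eqref{star f} then follows immediately from the equivalence $(a)\Leftrightarrow (c)$ of Proposition \ref{prop star means}.

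For the equality case with strictly increasing $\phi$, the direction $f=f^\#$ a.e. $\Rightarrow$ equality is trivial since then $u_f\equiv u_{f^\#}$. For the converse I argue by contraposition: suppose $f\neq f^\#$ on a set of positive Lebesgue measure. By \cite[Lemma 1.38]{Baernstein} (the same tool invoked in the proof of Theorem \ref{theorem max}), there exists $b\in(-\pi,0)\cup(0,\pi)$ such that the polarization $f_H$ of $f$ with respect to $b$ differs from $f$ on a set of positive measure. Since $\phi$ is strictly increasing, the equality statement of Theorem \ref{polarconvex} gives the strict inequality
\[
\int_{-\pi}^\pi \phi\bigl(u_f(x)\bigr)dx<\int_{-\pi}^\pi \phi\bigl(u_{f_H}(x)\bigr)dx.
\]
Polarization preserves the distribution function, so $(f_H)^\#=f^\#$, and applying the non-strict inequality \eqref{ineq sdr f} already proved, but now starting from $f_H$, gives $\int\phi(u_{f_H})\leq\int\phi(u_{f^\#})$. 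Chaining these two produces $\int\phi(u_f)<\int\phi(u_{f^\#})$, which contradicts equality.

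The main obstacle is the passage to the limit: one must be sure that uniform convergence of $u_{f_n}$ on $[-\pi,\pi]$ is strong enough to pull through a general convex increasing $\phi$, and that the single strict step at $f\mapsto f_H$ in the equality analysis is preserved after adding the remaining (non-strict) inequality $\int\phi(u_{f_H})\leq\int\phi(u_{f^\#})$. Both points are handled cleanly by the uniform continuity of $\phi$ on bounded intervals and by invariance of $f^\#$ under polarization, respectively.
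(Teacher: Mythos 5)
Your proposal is correct and follows essentially the same route as the paper: iterate Theorem \ref{polarconvex} along the polarization sequence from Lemma \ref{seq of polar to sdr}, pass to the limit using uniform convergence of \(u_{f_n}\to u_{f^\#}\), obtain \eqref{star f} via Proposition \ref{prop star means}, and for the equality case argue by contraposition using \cite[Lemma 1.38]{Baernstein} together with \((f_H)^\#=f^\#\). The only difference is that you spell out explicitly why uniform convergence carries through \(\phi\) (continuity of a convex function on the common bounded range), which the paper leaves implicit.
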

Note that inequality \((\ref{means sdr f})\) has been proven in \cite[Corollary 3.1]{Langford}. To achieve this, the authors viewed the solution to the Dirichlet problem as the limit of solutions to Robin problems. We will use a different method to prove our theorem, namely repeated polarizations. This will allow us to examine the equality case.
\begin{proof}
Let \(\{f_n\}_{n=1}^\infty\) be as in Lemma \(\ref{seq of polar to sdr}\). We apply Theorem \(\ref{polarconvex}\) for the measures such that \(d\mu_n=f_nd\lambda\) and get
\begin{align*}
    \int_{-\pi}^\pi\phi\bigl(u_f(x)\bigr)dx&\leq \int_{-\pi}^\pi\phi\bigl(u_{f_1}(x)\bigr)dx\leq \int_{-\pi}^\pi\phi\bigl(u_{f_2}(x)\bigr)dx\leq\dots\\
    &\leq \int_{-\pi}^\pi\phi\bigl(u_{f_n}(x)\bigr)dx\leq\dots.
\end{align*}
So the sequence \(\displaystyle\int_{-\pi}^\pi\phi\bigl(u_{f_n}(x)\bigr)dx\) is increasing in \(n\). By the uniform convergences of \(\{u_{f_n}\}_{n=1}^\infty\) to \(u_{f^\#}\), which is assured by Lemma \(\ref{seq of polar to sdr}\), we have
\[\lim_{n\to\infty}\int_{-\pi}^\pi\phi\bigl(u_{f_n}(x)\bigr)dx=\int_{-\pi}^\pi\phi\bigl(u_{f^\#}(x)\bigr)dx\]
which implies that
\begin{equation}
    \int_{-\pi}^\pi\phi\bigl(u_f(x)\bigr)dx\leq\int_{-\pi}^\pi\phi\bigl(u_{f^\#}(x)\bigr)dx.
\end{equation}
By Proposition \(\ref{prop star means}\), the inequality \((\ref{ineq sdr f})\) holds for every convex and increasing \(\phi:\mathbb{R}\to\mathbb{R}\) if and only if \[u_f^\star\leq u^\star_{f^\#} \text{ on } [0,2\pi].\]
If \(f=f^\#\) a.e. on \((-\pi,\pi)\), then \((\ref{ineq sdr f})\) holds as an equality, obviously.
\par Conversely, assume that \(\phi\) is strictly increasing and that \(f\) is not a.e. equal to its s.d.r.. Then, by \cite[Lemma 1.38]{Baernstein}, there exists some polarization \(f_H\) that is not a.e. equal to \(f\). By Theorem \(\ref{polarconvex}\), this means that
\[\int_{-\pi}^\pi\phi\bigl(u_f(x)\bigr)dx<\int_{-\pi}^\pi\phi\bigl(u_{f_H}(x)\bigr)dx.\]
We have already proven the first part of Theorem \(\ref{means sdr f}\), so we can apply it to \(f_H\), and get
\[\int_{-\pi}^\pi\phi\bigl(u_{f_H}(x)\bigr)dx\leq\int_{-\pi}^\pi\phi\bigl(u_{{(f_H)}^\#}(x)\bigr)dx.\]
But, note that \((f_H)^\#=f^\#\) since \(f,\) \(f_H\) and \(f^\#\) are all rearrangements of each other, and thus
\[\int_{-\pi}^\pi\phi\bigl(u_f(x)\bigr)dx<\int_{-\pi}^\pi\phi\bigl(u_{f_H}(x)\bigr)dx\leq\int_{-\pi}^\pi\phi\bigl(u_{f^\#}(x)\bigr)dx.\]
\end{proof}
\begin{lemma}\label{lemma star mu}
Let \(\mu\) be a finite Borel measure on \((-\pi,\pi)\). Then
\begin{equation}\label{star mu}
u_{\mu}^\star\leq u_{\mu^\#}^\star \text{ on }[0,2\pi].
\end{equation}
\end{lemma}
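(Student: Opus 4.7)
The plan is to decompose $\mu = \nu + \tau$ where $d\nu = f\,d\lambda$ and $\tau := \sigma + \delta \perp \lambda$ (Theorem \ref{decomp}), to use the linearity $u_\mu = u_f + u_\tau$ coming from the representation \eqref{u_mu}, and then to combine Theorem \ref{means sdr f} (applied to the absolutely continuous part) with Lemma \ref{lemma star} (applied to the singular part $\tau$) via subadditivity of the star operation. Subadditivity $(g_1 + g_2)^\star(t) \leq g_1^\star(t) + g_2^\star(t)$ is immediate from the definition as a supremum of integrals over Borel sets of Lebesgue measure $t$. This yields
\begin{equation}
u_\mu^\star(t) \leq u_f^\star(t) + u_\tau^\star(t) \leq u_{f^\#}^\star(t) + u_{\tau^\#}^\star(t),
\end{equation}
where by Definition \ref{sdr} one has $\tau^\# = M\delta_0$ with $M = \tau\bigl((-\pi,\pi)\bigr)$.

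The main step is then the identification $u_{f^\#}^\star(t) + u_{\tau^\#}^\star(t) = u_{\mu^\#}^\star(t)$. For this I would verify that both $u_{f^\#}$ and $u_{\tau^\#}$ are symmetric decreasing on $[-\pi,\pi]$. The function $u_{\tau^\#} = M\,G(\cdot,0) = M(\pi - |\cdot|)/2$ (Example \ref{example dirac}) is visibly symmetric decreasing. For $u_{f^\#}$, evenness follows from the evenness of $f^\#$ together with the symmetry $G(-x,-y) = G(x,y)$, and concavity follows from $-u_{f^\#}'' = f^\# \geq 0$ in the distributional sense; an even concave function on a symmetric interval is automatically non-increasing on the positive half (write $x_1 \in [0,x_2]$ as $\lambda x_2 + (1-\lambda)(-x_2)$ and invoke evenness). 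By Proposition \ref{properties of star}(b), a nonnegative symmetric decreasing function $g$ satisfies $g^\star(t) = \int_{-t/2}^{t/2} g(s)\,ds$, so
\begin{equation}
u_{f^\#}^\star(t) + u_{\tau^\#}^\star(t) = \int_{-t/2}^{t/2}\bigl(u_{f^\#}(s) + u_{\tau^\#}(s)\bigr)ds = \int_{-t/2}^{t/2} u_{\mu^\#}(s)\,ds = u_{\mu^\#}^\star(t),
\end{equation}
the last equality because $u_{\mu^\#} = u_{f^\#} + u_{\tau^\#}$ is itself the sum of two symmetric decreasing functions and hence symmetric decreasing. Chaining the two displays gives \eqref{star mu}.

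The only subtle point, and the one I would spend most care on, is precisely this identification: subadditivity of the star operation is generally strict, and equality is recovered here only because the two summands attain their star values on the \emph{same} symmetric interval $(-t/2,t/2)$. Once one has that observation the rest is assembly of previously established facts.
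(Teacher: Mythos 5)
Your proof is correct and follows essentially the same route as the paper's: decompose into absolutely continuous and singular parts, apply Theorem \ref{means sdr f} and Lemma \ref{lemma star} to the respective pieces, use subadditivity of $\star$, and then observe that subadditivity becomes equality because both $u_{f^\#}$ and $u_{\tau^\#}$ (being even and concave) are their own symmetric decreasing rearrangements and hence both realize their $\star$-value on the common centered interval $(-t/2,t/2)$. Your final paragraph makes explicit the point the paper uses implicitly, namely that the identification $u_{f^\#}^\star + u_{\tau^\#}^\star = u_{\mu^\#}^\star$ is what rescues the argument from the generic strictness of subadditivity.
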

\begin{proof}
Let \(0\leq f\in L^1[-\pi,\pi]\) and let \(\sigma\) be the singular measure on \((-\pi,\pi)\) such that \(d\mu=fd\lambda+d\sigma\). Note that \(u_\mu(x)=u_f(x)+u_\sigma(x)\) and \(u_{\mu^\#}(x)=u_{f^\#}(x)+u_{\sigma^\#}(x)\) for all \(x\in[-\pi,\pi]\). Note also that \(u_f\) is a concave function; (the proof of this is an easy Calculus exercise  using the representation of \(u_f\) given by Proposition \(\ref{Representation}\)). Even more directly, one can see that \(u_{f^\#}\) is an even function and, because it is also concave, we have \((u_{f^\#})^\#=u_{f^\#}\). Furthermore, \(u_{\sigma^\#}\) is also its own s.d.r., quite trivially (see Example \(\ref{example dirac}\)), and, thus, \(u_{\mu^\#}\) is equal to its own s.d.r. as well.
\par We can now prove our desired inequality using all these preliminary facts, Proposition \(4.7\), Lemma \(\ref{lemma star}\) and Theorem \(\ref{means sdr f}\) as follows:
\begin{align*}
u_\mu^\star(t)&=\sup_{\lambda(E)=t}\int_E u_\mu(s)ds=\sup_{\lambda(E)=t}\int_E\bigl(u_f(s)+u_\sigma(s)\bigr)ds\\
&\leq\sup_{\lambda(E)=t}\int_Eu_f(s)ds+\sup_{\lambda(E)=t}\int_E u_\sigma(s)ds=u_f^\star(t)+u_\sigma^\star(t)\\
&\leq u^\star_{f^\#}(t)+u^\star_{\sigma^\#}(t)=\int_{-t/2}^{t/2}\bigl(u_{f^\#}(s)\bigr)^\#ds+\int_{-t/2}^{t/2}\bigl(u_{\sigma^\#}(s)\bigr)^\#ds\\
&=\int_{-t/2}^{t/2}u_{f^\#}(s)ds+\int_{-t/2}^{t/2}u_{\sigma^\#}(s)ds=\int_{-t/2}^{t/2}\bigl(u_{f^\#}(s)+u_{\sigma^\#}(s)\bigr)ds\\
&=\int_{-t/2}^{t/2}u_{\mu^\#}(s)ds=\int_{-t/2}^{t/2}\bigl(u_{\mu^\#}(s)\bigr)^\#ds=u_{\mu^\#}(t).
\end{align*}
\end{proof}
\justify
\textbf{Proof of Theorem 1.3.}  Let \(d\mu=fd\lambda+d\sigma+d\delta\), where \(0\leq f\in L^1[-\pi,\pi]\), \(\sigma\) is a continuous measure on \((-\pi,\pi)\), \(\sigma\) and \(\delta\) are singular with respect to \(\lambda\) and \(\delta=\sum_i a_i \delta_{x_i}\) is as described in Definition \ref{decompdef}.
Inequality \((\ref{ineq sdr})\) is a direct consequence of inequality \((\ref{star mu})\) and Proposition \(\ref{prop star means}\).\\
Now, assume that \(\phi\) is strictly convex. Then \(\phi\) is also strictly increasing, so if \(\mu\) is absolutely continuous, the uniqueness statement of this theorem is a consequence of Theorem \(\ref{means sdr f}\).
\par For the general case, inspired by the proof of \cite[Proposition 3]{Baernstein1974}, we can decompose \(\phi\) in the following way:
\[\phi=\phi_1+\phi_2,\]
where \(\phi_1,\phi_2:\mathbb{R}\to\mathbb{R}\) are convex and increasing such that \(\phi_1=\phi\) on \((-\infty,0]\), \(\phi_1\) is linear on \([0,+\infty)\) with slope \(\alpha=\displaystyle\lim_{t\to0^-}\phi'(t)\geq0\), \(\phi_2=\phi(0)\) on \((-\infty,0]\) and \(\phi_2=\phi-\phi_1\) on \([0,+\infty)\). Then \(\phi_2\) is constant on \((-\infty,0]\) and strictly convex on \([0,+\infty).\) Additionally, we have that \(\phi_2\) is differentiable \(\lambda\)-almost everywhere and \(\phi_2'\), where it exists, is increasing. Moreover, \(\displaystyle\lim_{s\to{s_0}^-}\phi_2'(s)\) exists for every \(s_0\in \mathbb{R}\). Since \(\phi_2'\) is increasing and left-continuous and \(\displaystyle\lim_{s\to-\infty}\phi_2'(s)=0\), there exists a positive Borel measure \(\nu\) on \(\mathbb{R}\) such that \(\displaystyle\lim_{s\to{s_0}^-}\phi_2'(s)=\nu\bigl((-\infty,s_0)\bigr)\) for every \(s_0\in\mathbb{R}\). Note that \(\displaystyle\lim_{x\to{s_0}^-}\phi_2'(s)=0\) for every \(s_0\leq0\). From now on, let \(\phi_2'\) denote the left side-limit of the derivative of \(\phi_2\). For \(s_0>0\) we have
\begin{align}\label{representation phi 2}
\phi_2(s_0)-\phi(0)&=\int_{-\infty}^{s_0} \phi_2'(t)dt=-\int_{-\infty}^{s_0} \phi_2'(t)\frac{d}{dt}(s_0-t)dt\\
\nonumber&=\int_{(-\infty,s_0)}(s_0-t)d\nu(t)\\
\nonumber&=\int_\mathbb{R}(s_0-t)^+d\nu(t).
\end{align}
The same formula for \(\phi_2\) holds trivially for \(s_0\leq0\) since in that case, the integral on the right hand side is identically zero. More details about the measure \(\nu\) and the integration by parts that we used can be found on Chapter 3 of \cite{Folland}. Another fact we need to point out about \(\nu\) is that, since \(\phi_2\) is strictly convex on \((0,+\infty)\), we have \(\nu(J)>0\) for every interval \(J\) that contains some non-empty subset of \((0,+\infty)\).
\par Assume that \(\mu\neq\mu^\#\). By Theorem \ref{theorem max}, we have \(\displaystyle\max_{[-\pi,\pi]}u_\mu<u_{\mu^\#}(0)\). Consider the interval \(J=\bigl(\max u_\mu,u_{\mu^\#}(0)\bigr)\). Since \(0<\max u_\mu\), we have \(\nu(J)>0\). We apply the first part of Theorem \ref{sdrconvex} to the convex and increasing \(\phi_2\) and we get 
\begin{equation}\label{convex phi2}
\int_{-\pi}^\pi \phi_2\bigl(u_\mu(x)\bigr)dx\leq\int_{-\pi}^\pi \phi_2\bigl(u_{\mu^\#}(x)\bigr)dx.
\end{equation}
Similarly, for every \(t\in\mathbb{R}\), \(\Phi_t(s)=(s-t)^+:\mathbb{R}\to\mathbb{R}\) is non-negative convex and increasing, so, by the first part of Theorem \ref{sdrconvex}, we have
\begin{equation}\label{convex positive part}
0\leq\int_{-\pi}^\pi\bigl(u_\mu(x)-t\bigr)^+dx\leq\int_{-\pi}^\pi\bigl(u_{\mu^\#}(x)-t\bigr)^+dx.
\end{equation}
By \((\ref{representation phi 2})\) and \((\ref{convex phi2})\) and the representation we derived for \(\phi_2\), we get the inequality
\begin{equation}
\int_{-\pi}^\pi\int_\mathbb{R}\bigl(u_\mu(x)-t\bigr)^+d\nu(t)dx\leq\int_{-\pi}^\pi\int_\mathbb{R}\bigl(u_{\mu^\#}(x)-t\bigr)^+d\nu(t)dx.
\end{equation}
For \(t\in J\), \(u_\mu(x)<t\) and therefore
\[\int_{-\pi}^\pi\int_J \bigl(u_\mu(x)-t\bigr)^+dv(t)dx=0.\]
So, we get the following two inequalities:
\begin{equation*}
\int_{-\pi}^\pi\int_{\mathbb{R}\setminus J}\bigl(u_\mu(x)-t\bigr)^+d\nu(t)dx\leq\int_{-\pi}^\pi\int_{\mathbb{R}\setminus J}\bigl(u_{\mu^\#}(x)-t\bigr)^+d\nu(t)dx
\end{equation*}
and
\begin{equation*}
0=\int_{-\pi}^\pi\int_J\bigl(u_\mu(x)-t\bigr)^+d\nu(t)dx\leq\int_{-\pi}^\pi\int_J\bigl(u_{\mu^\#}(x)-t\bigr)^+d\nu(t)dx.
\end{equation*}
With the application of Fubini's Theorem, these inequalities give us
\begin{equation}\label{ineq eksw apo to J}
\int_{\mathbb{R}\setminus J}\int_{-\pi}^\pi\bigl(u_\mu(x)-t\bigr)^+dxd\nu(t)\leq\int_{\mathbb{R}\setminus J}\int_{-\pi}^\pi\bigl(u_{\mu^\#}(x)-t\bigr)^+dxd\nu(t)
\end{equation}
and
\begin{equation*}
0=\int_J\int_{-\pi}^\pi\bigl(u_\mu(x)-t\bigr)^+dxd\nu(t)\leq\int_J\int_{-\pi}^\pi\bigl(u_{\mu^\#}(x)-t\bigr)^+dxd\nu(t),
\end{equation*}
respectively.
\par Note that \(u_{\mu^\#}(0)>0\) and for \(t\in J\), we have \(\bigl(u_\mu(x)-t\bigr)^+=0\) for all \(x\in[-\pi,\pi]\). Since \(u_{\mu^\#}\) is continuous and \(u_{\mu^\#}(0)>t\) for \(t\in J\), there exists \(\varepsilon>0\) such that \(\bigl(u_{\mu^\#}(x)-t\bigr)^+>0\) for all \(x\in(-\varepsilon,\varepsilon)\). Thus, 
\begin{equation*}
0=\int_{-\pi}^\pi\bigl(u_\mu(x)-t\bigr)^+dx<\int_{-\varepsilon}^\varepsilon\bigl(u_{\mu^\#}(x)-t\bigr)^+dx\leq\int_{-\pi}^\pi\bigl(u_{\mu^\#}(x)-t\bigr)^+dx
\end{equation*}
for all \(t\in J\), and, since \(\nu(J)>0\),
\begin{equation}\label{ineq J}
0=\int_J\int_{-\pi}^\pi\bigl(u_\mu(x)-t\bigr)^+dxd\nu(t)<\int_J\int_{-\pi}^\pi\bigl(u_{\mu^\#}(x)-t\bigr)^+dxd\nu(t).
\end{equation}
Combining inequalities \((\ref{ineq eksw apo to J})\) and \((\ref{ineq J})\) we get
\begin{equation*}
\int_{\mathbb{R}}\int_{-\pi}^\pi\bigl(u_\mu(x)-t\bigr)^+dxd\nu(t)<\int_{\mathbb{R}}\int_{-\pi}^\pi\bigl(u_{\mu^\#}(x)-t\bigr)^+dxd\nu(t).
\end{equation*}
Applying Fubini's Theorem once again, we get
\begin{equation*}
\int_{-\pi}^\pi\int_{\mathbb{R}}\bigl(u_\mu(x)-t\bigr)^+d\nu(t)dx<\int_{-\pi}^\pi\int_{\mathbb{R}}\bigl(u_{\mu^\#}(x)-t\bigr)^+d\nu(t)dx,
\end{equation*}
which in turn implies that
\begin{equation}\label{convex strict phi2}
\int_{-\pi}^\pi \phi_2\bigl(u_\mu(x)\bigr)dx<\int_{-\pi}^\pi \phi_2\bigl(u_{\mu^\#}(x)\bigr)dx.
\end{equation}
Finally, we apply the first part of Theorem \ref{sdrconvex} to \(\phi_1\) and get
\begin{equation}\label{convex phi1}
\int_{-\pi}^\pi \phi_1\bigl(u_\mu(x)\bigr)dx\leq\int_{-\pi}^\pi \phi_1\bigl(u_{\mu^\#}(x)\bigr)dx.
\end{equation}
Combining \((\ref{convex strict phi2})\) and \((\ref{convex phi1})\) and recalling that \(\phi=\phi_1+\phi_2\), we get the desired strict inequality
\begin{equation*}
\int_{-\pi}^\pi \phi\bigl(u_\mu(x)\bigr)dx<\int_{-\pi}^\pi \phi\bigl(u_{\mu^\#}(x)\bigr)dx.
\end{equation*}
\qed
\section{Inequalities for the \texorpdfstring{\(L^p-\)}{TEXT}norms} 
\par Finally, we present a theorem that involves the symmetrization of the measure and the \(L^p-\)norms of solutions.
\begin{theorem}\label{symmetric Lp norms}
Let \(\mu\) be a finite Borel measure on \((-\pi,\pi)\) and let \(\mu^\#\) be its symmetrization. Let \(u_\mu\) and \(u_{\mu^\#}\) be the corresponding solutions to the Dirichlet problem \((\ref{Poisson})\), \((\ref{Dirichlet})\). Then
\begin{equation}\label{inequality of Lp norms of symmetric}
||u_\mu||_{L^p}\leq||u_{\mu^\#}||_{L^p} \text{, for }1\leq p\leq+\infty.
\end{equation}
Moreover, \((\ref{inequality of Lp norms of symmetric})\) holds as an equality if and only if the Radon--Nikodym derivative \(f\) of \(\mu\) is equal to its s.d.r. \(f^\#\) a.e. on \((-\pi,\pi)\) and the singular part of \(\mu\) is a Dirac mass at the origin.
\end{theorem}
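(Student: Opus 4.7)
The plan is to derive the \(L^p\)-inequality for \(1\le p<+\infty\) from Theorem \ref{sdrconvex} by applying it to the convex increasing function \(\phi_p(s)=(\max\{s,0\})^p\) on \(\mathbb{R}\). Since \(u_\mu,u_{\mu^\#}\ge 0\) by Remark \ref{remark thetikes luseis}, we have \(\int\phi_p(u_\mu)\,dx=||u_\mu||_{L^p}^p\) and likewise for \(\mu^\#\), so \eqref{ineq sdr} yields \eqref{inequality of Lp norms of symmetric}. The case \(p=+\infty\) follows either by letting \(p\to+\infty\) or, more directly, from Theorem \ref{theorem max}, since \(||u_\mu||_{L^\infty}=\max u_\mu\) and \(||u_{\mu^\#}||_{L^\infty}=u_{\mu^\#}(0)\).

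For the rigidity statement I would split into three sub-cases. When \(p=+\infty\), Theorem \ref{theorem max} gives the equality condition directly: \(\max u_\mu=u_{\mu^\#}(0)\) forces \(\mu=\mu^\#\), which is precisely the claimed characterisation of \(f\) and the singular part. When \(1<p<+\infty\), I would reapply Theorem \ref{sdrconvex} with \(\phi_p\). The subtlety here is that \(\phi_p\) is strictly convex only on \([0,+\infty)\), whereas Theorem \ref{sdrconvex} is phrased under strict convexity on all of \(\mathbb{R}\). Inspecting the proof in Section 5, however, the decomposition \(\phi_p=\phi_1+\phi_2\) produces \(\phi_1\equiv 0\) (because \(\lim_{t\to 0^-}\phi_p'(t)=0\) for \(p>1\)) and \(\phi_2=\phi_p\); the strict-inequality step only requires the associated measure \(\nu\) to charge the sub-interval \(J=(\max u_\mu,u_{\mu^\#}(0))\subset(0,+\infty)\), which is guaranteed by strict convexity of \(\phi_p\) on \((0,+\infty)\). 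Hence equality still forces \(\mu=\mu^\#\).

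The main obstacle is the case \(p=1\), since \(\phi(s)=s^+\) is linear on \([0,+\infty)\) and no strict convexity is available. Here I would work directly: by Fubini and the symmetry \(G(x,y)=G(y,x)\),
\[
||u_\mu||_{L^1}=\int_{-\pi}^\pi u_\mu(x)\,dx=\int_{-\pi}^\pi u_\lambda(y)\,d\mu(y),\qquad u_\lambda(y)=\tfrac{\pi^2-y^2}{2}.
\]
Writing \(d\mu=f\,d\lambda+d\sigma+d\delta\) and \(d\mu^\#=f^\#\,d\lambda+M\,d\delta_0\) with \(M=(\sigma+\delta)((-\pi,\pi))\), the difference \(||u_{\mu^\#}||_{L^1}-||u_\mu||_{L^1}\) splits as
\[
\Bigl(||u_{f^\#}||_{L^1}-||u_f||_{L^1}\Bigr)+\Bigl(M\cdot\tfrac{\pi^2}{2}-\int u_\lambda\,d(\sigma+\delta)\Bigr).
\]
The first bracket is nonnegative and equals zero if and only if \(f=f^\#\) a.e., by Theorem \ref{means sdr f} applied with the strictly increasing \(\phi(s)=s\). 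The second is nonnegative because \(u_\lambda\le u_\lambda(0)=\pi^2/2\), with equality if and only if \(\sigma+\delta\) is supported at \(\{0\}\); since \(\sigma\) is continuous this forces \(\sigma=0\) and \(\delta=c\,\delta_0\). Combining the two rigidity conclusions yields the stated characterisation, completing the proof.
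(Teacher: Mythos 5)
Your proposal is correct. For \(1<p\le+\infty\) it follows the paper's own route: the \(L^p\)-inequality from Theorem \ref{sdrconvex} with \(\phi_p(s)=(s^+)^p\), the \(L^\infty\) case and its rigidity from Theorem \ref{theorem max}, and for \(1<p<\infty\) the observation that in the decomposition \(\phi=\phi_1+\phi_2\) of the proof of Theorem \ref{sdrconvex} one gets \(\phi_1\equiv0\) and \(\phi_2=\phi_p\), so strict convexity on \((0,+\infty)\) alone suffices to make \(\nu(J)>0\) for \(J=(\max u_\mu,u_{\mu^\#}(0))\); you spell out this relaxation more carefully than the paper, which only remarks that \(\phi_p\) ``has the same properties as \(\phi_2\).'' Where you genuinely diverge is the case \(p=1\). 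The paper splits \(\mu=\nu+\sigma+\delta\), treats \(\nu+\delta\) through the polarization machinery (Theorem \ref{polarconvex} via Remark \ref{remark Lp norms polar}, together with the existence of a polarization separating \(\nu+\delta\) from its symmetrization), and only treats \(\sigma\) by the duality computation \(\|u_\sigma\|_1=\int\frac{\pi^2-y^2}{2}\,d\sigma\). You instead apply the duality identity \(\|u_\mu\|_{L^1}=\int u_\lambda\,d\mu\) to the whole measure, handle the absolutely continuous part by Theorem \ref{means sdr f} with \(\phi(s)=s\), and dispose of the entire singular part \(\sigma+\delta\) at once via the elementary pointwise bound \(u_\lambda(y)\le u_\lambda(0)=\pi^2/2\) with equality only at \(y=0\). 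This is cleaner: it avoids invoking a Baernstein-type separation lemma for measures with atoms and treats \(\sigma\) and \(\delta\) uniformly, at the cost of nothing — the ingredients (Fubini, symmetry of \(G\), Theorem \ref{means sdr f}) are all already available in the paper. Both arguments reach the same characterisation of equality.
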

\begin{proof}
 Let \(p\geq1\) and consider the function 
\[
\phi(s)=
\begin{cases}
0, & s\leq0,\\
s^p, &s\geq0.
\end{cases}
\]
By Theorem \ref{sdrconvex} and because \(u_\mu\) and \(u_{\mu^\#}\) are non-negative, we deduce that
\[||u_\mu||_p\leq||u_{\mu^\#}||_p\text{ for all }p\in[1,+\infty).\]
If \(p>1\), then \(\phi\) has the same properties as \(\phi_2\) in the proof of Theorem \ref{sdrconvex}, thus, this inequality is strict if \(\mu\neq\mu^\#.\) Theorem \ref{polarconvex} gives us the inequality for the \(L^\infty-\)norms and the uniqueness statement. Finally, we prove uniqueness statement for the case \(p=1\):\\
Let \(\mu=\nu+\sigma+\delta\) with the usual assumptions, then \(u_\mu=u_\nu+u_\sigma+u_\delta\). By Remark \ref{remark Lp norms polar}, Theorem \ref{polarconvex} is valid for \(\phi\), and applying Theorem \ref{sdrconvex}, we have
\begin{align}
\label{L1 norma ineq 1}\int_{-\pi}^\pi u_{\nu+\delta}(x)dx&=||u_{\nu+\delta}||_1\leq||u_{\nu_H+\delta_H}||_1\\
\nonumber&\leq||u_{\nu^\#+\delta^\#}||_1=\int_{-\pi}^\pi u_{\nu^\#+\delta^\#}(x)dx.
\end{align}
Because \(\phi\) is strictly increasing on \([0,u_{\mu^\#}(0)]\), \((\ref{L1 norma ineq 1})\) holds as an equality if and only if \(\nu=\nu_H\) and \(\delta=\delta_H\).\\
Assume that \(\nu+\delta\) is not the zero measure and \(\nu+\delta\neq\nu^\#+\delta^\#\), then \(\mu\neq\mu^\#\) and there exists \(b\in(-\pi,0)\cup(0,\pi)\) such that \(\nu+\delta\neq\nu_H+\delta_H\) and
\[||u_{\nu+\delta}||_1<||u_{\nu_H+\delta_H}||_1\leq||u_{\nu^\#+\delta^\#}||_1.\]
By Theorem \ref{sdrconvex}
\[||u_\sigma||_1\leq||u_{\sigma^\#}||_1,\]
and combining this inequality with the previous one we get (because the solutions are non-negative)
\[||u_\mu||_1<||u_{\mu^\#}||_1.\]
Assume that \(\sigma\) is not the zero measure, then \(\mu\neq\mu^\#\). The symmetrization of \(\sigma\) is the measure \(\sigma^\#=\sigma\bigl((-\pi,\pi)\bigr)\delta_0\), so, by Example \ref{example dirac}, we have
\begin{align}
\nonumber||u_{\sigma^\#}||_1&=\int_{-\pi}^\pi u_{\sigma^\#}(x)dx=\int_{-\pi}^\pi\int_{-\pi}^\pi G(x,y)d\sigma^\#(y)dx\\
\nonumber&=\int_{-\pi}^\pi\sigma\bigl((-\pi,\pi)\bigr)G(x,0)dx=\sigma\bigl((-\pi,\pi)\bigr)\frac{\pi^2}{2}.
\end{align}
To estimate the other \(L^1-\)norm, applying Fubini's theorem, we get
\begin{align}
\nonumber||u_\sigma||_1&=\int_{-\pi}^\pi u_{\sigma}(x)dx=\int_{-\pi}^\pi\int_{-\pi}^\pi G(x,y)d\sigma(y)dx\\
\nonumber&=\int_{-\pi}^\pi\int_{-\pi}^\pi G(x,y)dxd\sigma(y)=\int_{-\pi}^\pi\frac{\pi^2-y^2}{2}d\sigma(y)\\
\label{austhrh singular L apeira}&=||u_{\sigma^\#}||_1-\frac{1}{2}\int_{-\pi}^\pi y^2d\sigma(y)\leq||u_{\sigma^\#}||_1.
\end{align}
Because \(\sigma\) is not a Dirac mass at 0, there exists \(\varepsilon>0\) such that \(\sigma(U)>0\), where \(U=(-\pi,\pi)\setminus(-\varepsilon,\varepsilon).\) Therefore,
\[\int_{-\pi}^\pi y^2d\sigma(y)\geq \int_U y^2d\sigma(y)\geq \int_U \min_{U}y^2d\sigma(y)=\int_U \varepsilon^2d\sigma(y)=\varepsilon^2\sigma(U)>0.\]
Thus, \((\ref{austhrh singular L apeira})\) is a strict inequality.
\end{proof}
\section*{Acknowledgments}
I would like to thank Professor D. Betsakos, my advisor, for his advice during the preparation of this work. I also thank Professor D. Ntalampekos for his helpful suggestions.
\bibliography{bibliography}
\bibliographystyle{plain}
\nocite{*}
\end{document}